\newtheorem{theorem}{Theorem}[section]
\newtheorem{claim}{}[theorem]
\newtheorem{lemma}[theorem]{Lemma}
\newtheorem{corollary}[theorem]{Corollary}
\newtheorem{conjecture}[theorem]{Conjecture}
\theoremstyle{definition}
\newcommand{\linestar}{\text{\FiveStarLines}}
\newcommand{\bF}{\mathbb F}
\newcommand{\cE}{\mathcal{E}}
\newcommand{\cM}{\mathcal{M}}
\newcommand{\cN}{\mathcal{N}}
\DeclareMathOperator{\cl}{cl}
\DeclareMathOperator{\PG}{PG}
\DeclareMathOperator{\AG}{AG}
\DeclareMathOperator{\SAG}{AG^{\linestar}}
\newcommand{\del}{ \backslash  }
\numberwithin{subcase}{case}
\numberwithin{subsubcase}{subcase}
\newenvironment{subproof}[1][\proofname]{%
  \begin{proof}[Subproof:]%
}{%
  \end{proof}%
}
\begin{document}

\title{The structure of $I_4$-free and triangle-free binary matroids}
\author[Nelson]{Peter Nelson}
\address{Department of Combinatorics and Optimization, University of Waterloo, Waterloo, Canada. Email address: {\tt apnelson@uwaterloo.ca}}
\author[Nomoto]{Kazuhiro Nomoto}
\address{Department of Combinatorics and Optimization, University of Waterloo, Waterloo, Canada. Email address: {\tt 	knomoto@uwaterloo.ca}}
\thanks{This work was supported by a discovery grant from the Natural Sciences and Engineering Research Council of Canada and an Early Researcher Award from the government of Ontario}
\subjclass{05B35}
\keywords{matroids}
\date{\today}
\begin{abstract}
	A simple binary matroid is called \emph{$I_4$-free} if none of its rank-4 flats are independent sets. These objects can be equivalently defined as the sets $E$ of points in $\PG(n-1,2)$ for which $|E \cap F|$ is not a basis of $F$ for any four-dimensional flat $F$.
	
	We prove a decomposition theorem that exactly determines the structure of all $I_4$-free and triangle-free matroids. In particular, our theorem implies that the $I_4$-free and triangle-free matroids have critical number at most $2$. 
\end{abstract}

\maketitle

\section{Introduction}

This paper proves an exact structure theorem for the simple binary matroids with no four-element independent flat and two-dimensional subgeometry. In this paper we use standard matroid theory terminology, with some minor modifications that we explain below.

A \emph{simple binary matroid} is a pair $M = (E,G)$ where $G$ is a finite binary projective geometry $\PG(n-1,2)$ and $E$, the \emph{ground set}, is any subset of the points of $G$. Abusing notation, we write $G$ for the set of points of $G$. For brevity, we will simply refer to a simple binary matroid as a \emph{matroid} in this paper. Two matroids $M_1=(E_1, G_1)$ and $M_2 = (E_2, G_2)$ are \emph{isomorphic} if there exists an isomorphism from $G_1$ to $G_2$ which maps $E_1$ to $E_2$. We say that a matroid $N$ is an \emph{induced restriction} (or \emph{induced submatroid}) of $M$ if there exists some subgeometry $F$ of $G$ such that $N = (E \cap F, F)$. We use the notation $M \rvert F$ for the matroid $N$. If $M$ has no induced restriction isomorphic to $N$, then we say that $M$ is \emph{$N$-free}.

As opposed to simple binary matroids in the usual sense, our matroids are equipped with an extrinsic ambient space $G$. The \emph{dimension} of $M$ is the dimension of $G$ as a geometry. We do not require the ground set to span $G$; if $E$ does not span $G$, then we say that $M$ is \emph{rank-deficient}, and if $E$ spans $G$, then $M$ is \emph{full-rank}. This definition of matroids allows us to define the complement of a matroid; given a matroid $M = (E,G)$, the \emph{complement} of $M$ is the matroid $M^c = (G \del E, G)$.

We write $I_n$ for the matroid $(B,G)$ where $B$ is a basis of an $n$-dimensional projective geometry $G$. Note that $(E,G)$ is $I_n$-free if and only if $F \cap E$ is not a basis of $F$ for any $n$-dimensional subgeometry $F$ of $G$. A \emph{triangle} of $G$ is a two-dimensional subgeometry. If $E$ contains no triangle of $G$, then $M = (E,G)$ is \emph{triangle-free}. The main result for this paper is a structure theorem for $I_4$-free and triangle-free matroids; such matroids fall into one of two types, each of which arises via simple operations from a basic class of $I_4$-free, triangle-free matroids. 

Suppose that $M = (E,G)$ is an $n$-dimensional matroid. We say that $M$ is a \emph{doubling} if there exists $w \notin E$ for which $E+w = E$. Suppose further that there exists a hyperplane $H$ for which $E \subseteq G \del H$ (such matroids are called \emph{affine}). Then $M' = (E',G')$ is a \emph{$0$-expansion} of $M$ if $G$ is a hyperplane of $G'$ and $E = E'$, and $M' = (E',G')$ is a \emph{$1$-expansion} of $M$ if $G$ is a hyperplane of $G'$ and there exists $x \in G' \del G$ such that $E' = E \cup (x+H) \cup \{x\}$. Note that a $0$-expansion is simply the embedding of the matroid in a larger projective geometry.

We write $\SAG(n-1,2)$ for the $(n+1)$-dimensional matroid $M = (E,G)$ in which there exist nested hyperplanes $H_0 \subseteq G_0 \subseteq G$ and $x \in G \del G_0$, $y \in G_0 \del H_0$ such that $E = (G_0 \del H_0) \triangle \{x,y,x+y\}$. We note that $\SAG(n-1,2)$ is simply the \emph{series extension} of an affine geometry in standard matroid terminology. 

We can now state our main result.

\begin{theorem}\label{main_theorem_intro}
A full-rank matroid $M = (E,G)$ is $I_4$-free and triangle-free if and only if either

	\begin{itemize}
		\item $M$ can be obtained from a $1$-dimensional matroid by a sequence of $0$-expansions and $1$-expansions, or
		\item $M$ can be obtained by a sequence of doublings of $\SAG(n-1,2)$ for some $n \geq 3$. 
	\end{itemize}
\end{theorem}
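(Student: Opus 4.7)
I will prove the two implications separately: the forward direction by induction on the construction, and the converse by induction on $\dim(G)$.

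For the sufficiency direction, I check that each base class and each operation preserves both $I_4$-freeness and triangle-freeness. The $1$-dimensional base is trivial, and for $\SAG(n-1,2)$ I verify both properties directly by examining how points of $(G_0 \del H_0) \triangle \{x,y,x+y\}$ can sum. A $0$-expansion leaves $E$ unchanged, so both properties transfer. The main case is a $1$-expansion $E' = E \cup (x+H) \cup \{x\}$: since $E \subseteq G \del H$, any new triangle $\{a,b,a+b\}$ involving a new point forces the third to lie in $H$, which is disjoint from $E'$; the $I_4$-freeness argument is analogous, using the coset structure of $G'$ modulo $G$. For a doubling with new direction $w$, any triangle or $I_4$-basis would descend to one in the quotient and contradict the inductive hypothesis.

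For the necessity direction, I induct on $\dim(G)$, with small dimensions handled directly, and I split on whether $M$ is affine. If $M$ is affine with $E$ disjoint from some hyperplane $H$, let $G_0$ be the smallest subgeometry containing $E$: if $G_0 \ne G$, then $M$ is literally a $0$-expansion of $(E, G_0)$ and induction finishes; otherwise I use triangle-freeness to locate a point $x \in E$ witnessing an inverse $1$-expansion, reducing to a strictly smaller affine matroid. If $M$ is not affine, I attempt to find $w \notin E$ with $E + w = E$, making $M$ a doubling and allowing induction on the quotient. The residual case, in which $M$ is neither affine nor a doubling, must be shown to force $M \cong \SAG(n-1,2)$.

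The main obstacle is this residual case. My strategy is to locate a maximal affine subgeometry $G_0 \del H_0$ contained in $E$; triangle-freeness pushes this to be reasonably large, while non-affineness bounds how much of $E$ can lie inside a single hyperplane complement. I then analyze the points of $E$ outside this affine core: each such point, together with the existing affine structure, should either create a triangle, complete an $I_4$-basis, exhibit a hyperplane disjoint from $E$, or supply a doubling direction. By our standing assumptions all four outcomes are forbidden, and a careful case analysis should show that the only surviving configuration is the symmetric-difference triangle $\{x, y, x+y\}$ from the definition of $\SAG$. Controlling this case analysis, especially ruling out potential doubling directions arising from sums of points outside $G_0$, is the delicate technical core of the argument.
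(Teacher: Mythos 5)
Your sufficiency direction is essentially the paper's: verify that $0$-expansions, $1$-expansions and doublings preserve $I_4$-freeness and triangle-freeness (the paper quotes Lemmas \ref{doubling_prop} and \ref{affine_exp_prop} for exactly this), so that part is fine. The necessity direction, however, has two genuine gaps, and in both cases the missing content is precisely the technical heart of the paper.

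First, the affine case. Inverting a $1$-expansion is not a matter of ``locating a point $x \in E$'': you must exhibit a hyperplane $G_1$ of $G$ and a hyperplane $H_1$ of $G_1$ with $E \cap G_1$ affine over $H_1$ and $E \setminus G_1 = x + [H_1]$, i.e.\ the part of $E$ outside $G_1$ must be exactly one coset of a codimension-$2$ flat. Triangle-freeness alone does not produce this coset structure. The paper gets it by passing to the local matroid $M_0 = (\{v : v+z \in E\}, H)$ for a fixed $z \in E$ and a hyperplane $H$ disjoint from $E$; $I_4$-freeness of $M$ makes $M_0$ $I_3$-free and affineness makes $M_0$ $AI_4$-free, and then a separate, quite involved result (Lemma \ref{AI4_free_special_hyperplane}, proved via a stabilizer-subgroup argument and a long case analysis) supplies a hyperplane of $H$ that either contains $F$, misses $F$, contains $H \setminus F$, or is contained in $F$; each of the four outcomes is then translated back into a $0$- or $1$-expansion. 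Your plan contains no substitute for this step.

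Second, the non-affine case. You correctly identify the residual case (neither affine nor a doubling, must force $\SAG(n-1,2)$) as the crux, but you only assert that ``a careful case analysis should show'' the conclusion; that is the entire difficulty, not a detail. The paper's concrete handle is that for $I_4$-free, triangle-free matroids, non-affineness is equivalent to containing an induced $C_5$ (Corollary \ref{affine_I4_tri_free}). One extends a basis of that $C_5$ to a hyperplane $H$, applies induction to conclude $M\rvert H \cong D^k(\SAG(m-1,2))$, and then uses two targeted lemmas: if $k \ge 1$, a doubled induced $C_5$ inside $H$ forces all of $M$ to be a doubling (Lemma \ref{doubled_C5}, proved by exhibiting explicit $I_4$-restrictions from any failure of the doubling identity); if $k = 0$, a case analysis of how $\SAG(m-1,2)$ can extend by one dimension (Lemma \ref{series_AG_extend}) shows the only options are $E \subseteq H$, a doubling, or $\SAG(m,2)$. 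Your alternative of growing a maximal affine subgeometry inside $E$ and analyzing the exceptional points has no analogous mechanism for certifying a global doubling direction $w$ (a local computation must be propagated to all of $E$) nor for pinning down the series pair of $\SAG$. As written, the proposal is a plausible outline whose two hardest steps are deferred rather than proved.
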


We remark that the two outcomes of Theorem \ref{main_theorem_intro} are mutually exclusive. For an $n$-dimensional matroid $M = (E,G)$, its \emph{critical number} is $\chi(M) = n - \omega(M^c)$, where $\omega(M)$ is the dimension of a largest subgeometry of $M$ contained in $E$; the critical number of a matroid can be seen as a matroidal analogue of chromatic number for graphs (see [\ref{oxley}] p.588 for a discussion). Then the first outcome in Theorem \ref{main_theorem_intro} results in a class of matroids with critical number $1$ (a matroid with critical number $1$ is called \emph{affine}), whereas the second outcome consists of matroids with critical number $2$. It is also worth noting that the second outcome is a much more restrictive class of matroids; up to isomorphism, there are exactly $n-3$ such matroids for any given dimension $n \geq 4$. 

The proof of Theorem \ref{main_theorem_intro} falls in two parts, based on the existence of an induced $C_5$-restriction. The matroid $C_5$ is the full-rank $4$-dimensional matroid on $5$ elements adding to zero; it will not be difficult to show that an $I_4$-free, triangle-free matroid $M$ is affine if and only if it does not contain an induced $C_5$-restriction. It turns out that having an induced $C_5$-restriction greatly restricts the structure of an $I_4$-free, triangle-free matroid. 

The case when $M$ is affine is more involved, and in fact we will first consider the class of \emph{$AI_4$-free} matroids; we say that a matroid $M = (E,G)$ is $AI_4$-free if for any basis $ \{x_1, x_2, x_3, x_4\} \subseteq E$ of a four-dimensional subgeometry of $G$, there exists some $i \in \{1,2,3,4\}$ such that $\sum_{j \neq i}x_j \in E$. It turns out that if $M = (E,G)$ is $AI_4$-free, then we can always find a special hyperplane $H$ of $G$ such that either $E$ or $G \del E$ is contained in either $H$ or $G \del H$. Once we understand this feature of $AI_4$-free matroids, it will be straightforward to derive the outcome corresponding to the affine matroids in Theorem \ref{main_theorem_intro}. Also, we will provide a structural theorem for $AI_4$-free matroids. 

Finally, we remark that Theorem \ref{main_theorem_intro} settles the case $s=4$ in the following conjecture, made in [\ref{bkknp}]. 

\begin{conjecture}[{[\ref{bkknp}]}]\label{chi_bounded_conj}
For any $s \geq 4$, the class of $I_s$-free and triangle-free matroids has bounded critical number.
\end{conjecture}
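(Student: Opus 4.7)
The conjecture is open for $s\geq 5$, so what follows is necessarily speculative; I sketch the most natural inductive approach. The plan is to induct on $s$, with base case $s=4$ supplied by Theorem~\ref{main_theorem_intro}, which in fact yields the sharp bound $\chi\leq 2$. Assuming that every $I_{s-1}$-free, triangle-free matroid has critical number at most some constant $c_{s-1}$, let $M=(E,G)$ be an $n$-dimensional $I_s$-free, triangle-free matroid. If $M$ is itself $I_{s-1}$-free the inductive hypothesis applies, so one may assume that $M$ contains an induced $I_{s-1}$-restriction on some $(s-1)$-dimensional flat $F_0$ with $E\cap F_0=\{x_1,\dotsc,x_{s-1}\}$ a basis of $F_0$.

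The second step is a local analysis of $E\setminus F_0$. For each $y\in E\setminus F_0$ the flat $\langle F_0,y\rangle$ is $s$-dimensional and contains the independent set $\{x_1,\dotsc,x_{s-1},y\}\subseteq E$, so $I_s$-freeness forces an additional point of $E$ into $\langle F_0,y\rangle$. Triangle-freeness rules out any $y+x_i$ (which would form a triangle with $y$ and $x_i$), and the identity $E\cap F_0=\{x_1,\dotsc,x_{s-1}\}$ rules out any $\sum_{i\in T}x_i$ with $|T|\geq 2$, so some $T_y\subseteq\{1,\dotsc,s-1\}$ with $|T_y|\geq 2$ must satisfy $y+\sum_{i\in T_y}x_i\in E$. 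Pigeonholing over the at most $2^{s-1}-s$ possible values of $T_y$ yields a subset $T\subseteq\{1,\dotsc,s-1\}$, an element $d=\sum_{i\in T}x_i\in F_0\setminus E$, and a set $S\subseteq E\setminus F_0$ of density at least $(2^{s-1}-s)^{-1}$ with $S+d\subseteq E\setminus F_0$. After replacing $S$ by $S\cup(S+d)$ one may assume $S$ is a union of cosets of $\{0,d\}$, and the contraction $M/d$ compresses $S$ injectively into a matroid of dimension $n-1$.

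From here I would attempt to iterate: pass to $M/d$, locate a new induced $I_{s-1}$-restriction or directly bound $\chi$, and combine the resulting flats into a single low-codimension flat of $G$ disjoint from $E$. An orthogonal strategy would be to seek a higher-$s$ analogue of the $C_5$-dichotomy used in the proof of Theorem~\ref{main_theorem_intro}: identify a small configuration $X_s$ whose presence forces $M$ into a rigid $\SAG$-like class of bounded $\chi$ and whose absence forces $M$ to be affine, and then handle both cases via the doubling and expansion calculus developed in this paper.

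The main obstacle is that neither $I_s$-freeness nor triangle-freeness is preserved under contraction: three points of $E$ in general position may sum to $d$ rather than $0$, producing a new triangle in $M/d$. Closing the induction therefore appears to demand either a much more careful choice of $d$---exploiting the simultaneous presence of many induced $I_{s-1}$-restrictions to arrange that $M/d$ stays in the class---or a genuinely additive-combinatorial input, for instance a Freiman-type theorem in $\bF_2^n$ converting the abundance of additive quadruples supplied by the pigeonhole step into a cover of $E$ by a bounded number of affine subspaces. Identifying the correct substitute for the $C_5$-dichotomy used at $s=4$ seems to be the heart of the difficulty.
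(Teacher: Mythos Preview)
The statement you were asked to address is a \emph{conjecture}, not a theorem, and the paper does not prove it. The paper's only contribution toward Conjecture~\ref{chi_bounded_conj} is to settle the base case $s=4$ via Theorem~\ref{main_theorem_intro} (more precisely, via Corollary~\ref{cn_at_most_2}); for $s\ge 5$ the problem remains open as stated. You recognised this correctly and were upfront that what follows is speculation rather than a proof, so there is no ``paper's own proof'' to compare your proposal against.

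As for the speculative content: the local step is sound. Given an induced $I_{s-1}$ on a flat $F_0$ with $E\cap F_0=\{x_1,\dotsc,x_{s-1}\}$ and any $y\in E\setminus F_0$, $I_s$-freeness does force an extra element of $E$ in $\cl(F_0\cup\{y\})$, and after ruling out points of $F_0$ and the triangle-creating $y+x_i$'s you are indeed left with some $y+\sum_{i\in T_y}x_i\in E$ for $|T_y|\ge 2$; the pigeonhole count $2^{s-1}-s$ is correct. The genuine gap is exactly the one you name: contraction by $d$ destroys both triangle-freeness and $I_s$-freeness in general, so the induction does not close, and nothing in the paper's toolkit (doublings, expansions, the $C_5$-dichotomy, or the $AI_4$ analysis) currently extends to $s\ge 5$. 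Your two suggested escape routes---a carefully chosen contraction element, or an $X_s$ configuration playing the role of $C_5$---are plausible directions but remain programmatic. In short: your write-up is an honest and technically accurate status report on an open problem, not a proof, and the paper offers nothing further to compare it to.
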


\section{Preliminaries}

\subsection*{Flats, cosets and translates} We say that a subgeometry of $G$ is a \emph{flat} of $G$. Viewing $G$ as $\bF_2^n \del \{0\}$, a set $F \subseteq G$ is a flat if and only if $F \cup \{0\}$ is closed under addition. We write $[F]$ for the set $F \cup \{0\}$. By convention, we call flats of dimension 2 \emph{triangles}. A triangle of $G$ is equivalently a triple $\{x, y, x+y \} \subseteq G$ with $x \neq y$. A maximal proper flat of $G$ is a \emph{hyperplane}. 

A \emph{coset} of a flat $F$ in a flat $G \supseteq F$ is any set of the form $A = x + [F]$ for some $x \in G \del F$. We do not consider the set $F$ itself to be a coset; when we wish to include the set $F$ itself, we use the term \emph{translate}. 

\subsection*{Induced restrictions} Recall that if $B$ is a basis of an $n$-dimensional projective geometry $H$, then we write $I_n$ for the matroid $(B,H)$. In this paper, we will frequently be obtaining a contradiction by finding an induced $I_4$-restriction. Therefore, to keep our proofs concise, we will abuse notation and say that the set $B$ itself is an induced $I_n$-restriction. Note also that we will often simply claim that $B$ is an induced $I_4$-restriction rather than explicitly writing out calculations to show that these elements do not span any other points of the matroid; instead enough information will be provided prior to such a claim so that it should be easy to check that $M | \cl(B)$ is indeed an induced $I_4$-restriction. 

\subsection*{Critical number} Recall that the \emph{critical number} $\chi(M)$ of an $n$-dimensional matroid $M = (E,G)$ is $n-k$ where $k$ is the size of a largest projective geometry restriction of $M^c = (G \del E, G)$. If $\chi(M)=1$, then $M$ is \emph{affine}. Below we give a standard characterisation of affine matroids in terms of induced \emph{odd circuits}. For $n$ odd, the \emph{circuit of length $n$}, denoted $C_n$, is the full-rank $(n-1)$-dimensional matroid whose ground set consists of $n$ points that add to zero. Note that odd circuits have critical number exactly $2$. The following characterisation is the matroidal analogue of the characterisation of bipartite graphs in terms of excluded odd cycles. 

\begin{theorem}\label{affine_circuit_characterisation}
A matroid is affine if and only if it has no induced odd circuits.
\end{theorem}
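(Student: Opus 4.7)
My plan is to handle the two implications separately. The forward direction is a direct linear-algebra computation using the characterizing hyperplane of an affine matroid, and the converse uses a minimality argument to promote an arbitrary odd $\bF_2$-linear relation among elements of $E$ into a genuine induced odd circuit.

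For the forward direction, suppose $M = (E,G)$ is affine, so there is a hyperplane $H$ of $G$ with $E \cap H = \emptyset$. Let $\phi \colon \bF_2^n \to \bF_2$ be the linear functional with $\ker \phi = [H]$; then $\phi(e) = 1$ for every $e \in E$. For any $\{x_1,\ldots,x_m\} \subseteq E$ with $\sum_{i=1}^m x_i = 0$, applying $\phi$ gives $0 = \sum_{i=1}^m \phi(x_i) = m \pmod{2}$, so $m$ is even. Hence $E$ contains no circuit of odd length, and in particular no induced odd circuit.

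For the converse, I would argue by contrapositive. If $M$ is not affine, then no functional satisfies $\phi(e) = 1$ for every $e \in E$, which by standard $\bF_2$-linear algebra means there exist distinct $e_1, \ldots, e_k \in E$ with $k$ odd and $\sum_{i=1}^k e_i = 0$. Choose such a collection with $k$ minimum; since $0 \notin G$ we have $k \geq 3$. I would then verify that $C = \{e_1, \ldots, e_k\}$ induces a $C_k$-restriction. Minimality forces $C$ to be a circuit, since any proper nonempty $S \subsetneq [k]$ with $\sum_{i \in S} e_i = 0$ would also yield $\sum_{i \in S^c} e_i = 0$, and exactly one of $|S|, k-|S|$ is odd and strictly less than $k$. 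To see that $C$ is induced, suppose toward contradiction that some $e^* \in E \cap \cl(C) \setminus C$ exists and write $e^* = \sum_{i \in S} e_i$; the conditions $e^* \neq 0$ and $e^* \neq e_j$ (using $e_i = \sum_{j \neq i} e_j$) force $2 \leq |S| \leq k-2$. Then, depending on the parity of $|S|$, either $\{e^*\} \cup \{e_i : i \in S\}$ (when $|S|$ is even) or $\{e^*\} \cup \{e_i : i \in S^c\}$ (when $|S|$ is odd) is an odd subset of $E$ summing to zero of size at most $k-2$, contradicting minimality.

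The main obstacle is this last step: passing from an arbitrary minimal odd dependency to a genuinely induced circuit. The delicate point is excluding the boundary cases $|S| \in \{1, k-1\}$ when swapping an element of $C$ for $e^*$; once these are ruled out, the parity bookkeeping that produces a strictly smaller odd dependency is routine.
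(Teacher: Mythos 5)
Your proof is correct and rests on the same standard parity argument as the paper's: the forward direction is the observation that an affine matroid admits a linear functional equal to $1$ on all of $E$ (the paper phrases this as monotonicity of critical number under induced restrictions), and the converse amounts to extracting an induced odd circuit from an odd $\bF_2$-dependency among elements of $E$. The paper compresses that extraction into the phrase ``an inductive argument'' applied to a basis of $\cl(E)$, whereas you carry it out explicitly by minimizing the odd dependency and ruling out the boundary cases $|S|\in\{1,k-1\}$; the two arguments are interchangeable, with yours supplying the details the paper elides.
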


\begin{proof}
The forward direction follows by the observation that the critical number does no increase under induced restrictions.

Conversely, suppose that $M = (E,G)$ contains no induced odd circuits, and let $B \subseteq E$ be a basis of $\cl(E)$. As $M$ has no induced odd circuits, it follows through an inductive argument that $\cl(B+B) \cap E = \varnothing$. But $H = \cl(B+B)$ is a hyperplane, so $M$ is affine.
\end{proof}

Note that odd circuits of length $5$ or more contain an induced $I_4$-restriction. The following easy consequence is used repeatedly throughout the paper, often without explicit reference.

\begin{corollary}\label{affine_I4_tri_free}
If $M$ is $I_4$-free, triangle-free, then $M$ is affine if and only if $M$ does not contain a $C_5$-restriction.
\end{corollary}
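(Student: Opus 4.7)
The plan is to combine Theorem \ref{affine_circuit_characterisation} with the remark (stated just before the corollary) that every odd circuit of length at least $5$ contains an induced $I_4$-restriction. The two directions of the biconditional come out separately but both reduce to this observation.

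For the forward implication, I would argue by contrapositive: if $M$ contains an induced $C_5$-restriction, then $M$ has an induced odd circuit, and Theorem \ref{affine_circuit_characterisation} tells us $M$ is not affine. This direction uses none of the hypotheses except the existence of an induced $C_5$.

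For the reverse implication, assume $M$ is $I_4$-free and triangle-free and contains no $C_5$-restriction; I would like to deduce via Theorem \ref{affine_circuit_characterisation} that $M$ is affine, so it suffices to show $M$ has no induced $C_n$ for any odd $n \geq 3$. I would split into cases based on $n$: the case $n=3$ is ruled out because $C_3$ is a triangle and $M$ is triangle-free; the case $n=5$ is ruled out by hypothesis; and for $n \geq 7$, I would invoke the fact that $C_n$ itself contains an induced $I_4$-restriction (any four of the $n$ elements work, since in $C_n$ the only zero-sum subset of the ground set is the full set, so no $\leq 5$-element subset sums to zero, which is exactly what one needs to check independence and that the six pairwise sums, four triple-sums, and one quadruple-sum all lie outside $E$). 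This contradicts $M$ being $I_4$-free.

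There is no real obstacle; the main content is the combinatorial observation about $C_n$ for $n\geq 7$, and even that was flagged in the remark preceding the corollary. The proof should be two or three lines once Theorem \ref{affine_circuit_characterisation} is cited.
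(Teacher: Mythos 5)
Your proof is correct and is essentially the argument the paper intends: it leaves the corollary as an immediate consequence of Theorem \ref{affine_circuit_characterisation} together with the remark that long odd circuits contain induced $I_4$-restrictions, which is exactly the case split ($n=3$ by triangle-freeness, $n=5$ by hypothesis, $n\geq 7$ by $I_4$-freeness) you carry out. If anything you are slightly more careful than the paper's preceding remark, since $C_5$ itself is $I_4$-free and so must be excluded by hypothesis rather than by the induced-$I_4$ observation, which genuinely requires $n\geq 7$.
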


\subsection*{Doublings} Recall that a matroid $M=(E,G)$ is a \emph{doubling} if there exists $w \in G \del E$ for $E = w+E$; we sometimes specify such an element $w$ and say that the matroid is a \emph{doubling by $w$}. Note that the condition is equivalent to the existence of $w \in G \del E$ and a hyperplane $H \subseteq G$ not containing $w$ for which $E = [w] + (E \cap H)$. Hence we sometimes specify such a hyperplane $H$ and say that $M$ is the doubling with respect to the matroid $M | H$ by $w$, if this condition holds. The following lemma shows that doublings preserve critical number and the absence of most fixed induced submatroids ([\ref{bkknp}]).

\begin{lemma}[{[\ref{bkknp}], Lemma 2.2}]\label{doubling_prop}
Let $M$ be the doubling of a matroid $M_0$. Then
	\begin{itemize}
		\item $\chi(M) = \chi(M_0)$, and
		\item if $N$ is a matroid that is not a doubling of another matroid, and $M_0$ contains no induced $N$-restriction, then neither does $M$.
	\end{itemize}
\end{lemma}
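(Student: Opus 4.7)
The plan is to handle the two items separately after unpacking the doubling structure: writing $E_0 = E \cap H$, we have $E = E_0 \cup (w + E_0)$ with $w \notin E$, and hence $G \del E = (H \del E_0) \cup \{w\} \cup (w + (H \del E_0))$.

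For the critical number identity, I use $\chi(M) = \dim(G) - \omega(M^c)$ and show $\omega(M^c) = \omega(M_0^c) + 1$. The lower bound is direct: given a flat $F_0 \subseteq H \del E_0$ of dimension $\omega(M_0^c)$, the set $F_0 \cup \{w\} \cup (w + F_0)$ is a flat of $G$ of dimension one larger (its closure is $[F_0] \oplus \langle w \rangle$), and it sits inside $G \del E$. For the upper bound, take a flat $F \subseteq G \del E$ of dimension $m$ and split into two cases. If $w \in F$, then $F \cap H$ is a flat of dimension $m-1$ inside $H \del E_0$. If $w \notin F$, I use the linear projection $\pi \colon [G] \to [H]$ with kernel $\{0,w\}$ coming from the splitting $[G] = [H] \oplus \langle w \rangle$; since $w \notin [F]$, this $\pi$ is injective on $[F]$, and a quick check shows $\pi(F) \subseteq H \del E_0$, so $\pi(F)$ is a flat in $M_0^c$ of dimension $m$. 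Either case yields $m \leq \omega(M_0^c) + 1$.

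For the induced-restriction claim, I argue by contrapositive: assume $M \rvert F \cong N$ for some flat $F$ of $G$ and split on the position of $F$ relative to $H$ and $w$. If $F \subseteq H$, then $M \rvert F = M_0 \rvert F$ outright, so $M_0$ contains an induced $N$-restriction. If $F \not\subseteq H$ and $w \in F$, a direct computation yields $E \cap F = (E_0 \cap F \cap H) \cup (w + (E_0 \cap F \cap H))$, so $M \rvert F$ is itself a doubling of $M \rvert (F \cap H)$ by $w$; since $N \cong M \rvert F$, this contradicts the hypothesis that $N$ is not a doubling. In the remaining case $F \not\subseteq H$ and $w \notin F$, I fix any $u \in F \del H$, write $u = w + h$ for the unique $h \in H$ (necessarily $h \neq 0$), and let $\phi$ be the restriction of $\pi$ above to $[F]$. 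Because $w \notin [F]$, the map $\phi$ is an injective linear isomorphism and sends $F$ onto the flat $\phi(F) = (F \cap H) \cup \{h\} \cup (h + (F \cap H))$ of $H$. A point-by-point check then gives $\phi(E \cap F) = E_0 \cap \phi(F)$, so $\phi$ realizes a matroid isomorphism $M \rvert F \cong M_0 \rvert \phi(F)$, producing an induced $N$-restriction in $M_0$.

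The main obstacle is routine bookkeeping in the last case: one has to check that $h \notin F \cap H$, which holds because otherwise $u + h = w$ would lie in $F$, contradicting $w \notin F$; this ensures $\phi(F)$ has full dimension $\dim F$. Verifying $\phi(E \cap F) = E_0 \cap \phi(F)$ is then a matter of examining separately the elements of $F \cap H$ and those of $u + [F \cap H]$, using the defining equation $E = E_0 \cup (w + E_0)$. Conceptually, the whole argument rests on the dichotomy that the doubling axis $w$ either lies inside the induced flat (forcing the restriction itself to be a doubling) or is absent (letting us project cleanly onto an induced submatroid of $M_0$).
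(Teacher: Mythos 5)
Your proof is correct and complete. Note that the paper does not actually prove this lemma --- it is quoted from reference [2] (Bonamy, Kardo\v{s}, Kelly, Nelson, Postle, Lemma 2.2) --- so there is no in-paper argument to compare against; your self-contained argument, based on the dichotomy of whether the flat $F$ (or the large flat in $M^c$) contains the doubling element $w$, together with the projection $\pi$ with kernel $\{0,w\}$ in the complementary case, is exactly the natural way to prove both bullets, and the bookkeeping (in particular the verifications that $w+F_0$ avoids $E$, that $\phi(E\cap F)=E_0\cap\phi(F)$, and that $h\notin F$) all checks out.
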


In particular, the above lemma applies when $N$ is a triangle or $I_n$ for $n \geq 3$. We will sometimes write $D(M)$ to mean the resulting matroid from doubling $M$ and define $D^k(M) = D(D^{k-1}(M))$ recursively for $k \geq 2$.

\subsection*{Expansion operations}

Suppose that $M$ is an affine matroid, so that there exists a hyperplane $H$ for which $E \subseteq G \del H$. Then $M' = (E',G')$ is a \emph{$0$-expansion} of $M$ if $G$ is a hyperplane of $G'$ and $E = E'$. Note that it is simply the embedding of $M$ in an $(n+1)$-dimensional projective geometry. Clearly $0$-expansions preserve critical number and $I_s$-freeness for any $s \geq 1$.

Still supposing that $M$ is affine with a hyperplane $H$ for which $E \subseteq G \del H$, $M' = (E',G')$ is a \emph{$1$-expansion} of $M$ if $G$ is a hyperplane of $G'$ and there exists $x \in G' \del G$ such that $E' = E \cup (x+H) \cup \{x\}$. Note any two $1$-expansions of the same matroid are isomorphic. We have the following easy consequence.

\begin{lemma}\label{affine_exp_prop}
Let $M$ be an affine matroid, and let $M'$ be the $1$-expansion of $M$. Let $s \geq 4$. Then,

\begin{itemize}
	\item $M'$ is affine, and
	\item if $M$ is $I_s$-free, then so is $M'$.
\end{itemize}
\end{lemma}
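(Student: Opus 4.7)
The plan is to prove the two assertions separately. For affineness I would exhibit a hyperplane $H'$ of $G'$ with $E' \cap H' = \varnothing$. For $I_s$-freeness I would argue by contradiction: any putative $s$-dimensional flat $F$ of $G'$ for which $B := E' \cap F$ is a basis of $F$ must either already lie inside $G$ (contradicting $I_s$-freeness of $M$) or produce an impossible configuration, because the new points $x + [H]$ contribute too many points relative to the dimension they span.

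For affineness, I fix the hyperplane $H$ of $G$ guaranteed by $E \cap H = \varnothing$, pick any $a \in G \setminus H$, and let $H'$ be the hyperplane of $G'$ whose associated subspace is $[H] + \langle x+a \rangle$; as a point set $H' = H \cup ((x+a)+[H])$. One then checks directly that $E \cap H' = E \cap H = \varnothing$ and that $(x+[H]) \cap ((x+a)+[H]) = \varnothing$ (the latter because $a \notin [H]$), so $E' \cap H' = \varnothing$.

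For $I_s$-freeness, suppose $F$ is an $s$-dimensional flat of $G'$ with $B = E' \cap F$ a basis. If $F \subseteq G$, then $B = E \cap F$ is already an induced $I_s$-restriction of $M$, contradicting the hypothesis. Otherwise $F_0 := F \cap G$ has dimension $s-1$, and I would decompose $B = (E \cap F_0) \cup I$ where $I := (x+[H]) \cap F$. The key observation is that, if nonempty, $I$ is a coset of $[H] \cap [F_0] = [H \cap F_0]$, so $|I| = 2^{j'}$ with $j' := \dim[H \cap F_0]$, while $I$ is contained in a subspace of dimension only $j'+1$ (generated by one element of $I$ together with a basis of $[H \cap F_0]$). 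Independence of $B$ forces independence of $I$, hence $2^{j'} \le j'+1$, so $j' \le 1$. But the standard dimension inequality
\[
\dim[H \cap F_0] \ge \dim[H] + \dim[F_0] - \dim[G] = (n-1) + (s-1) - n = s-2
\]
gives $j' \ge s-2 \ge 2$ whenever $s \ge 4$, a contradiction. (The case $I = \varnothing$ is immediate: then $B \subseteq F_0$ cannot span the $s$-dimensional flat $F$.)

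The whole argument is a bookkeeping exercise in cosets and dimension formulas; the one genuinely nontrivial point, and the reason the lemma requires $s \ge 4$, is the exponential-versus-linear discrepancy between $|I| = 2^{j'}$ and the dimension $j'+1$ of its linear span. This discrepancy is what guarantees that no new induced $I_s$-restriction can be created by a $1$-expansion, so I expect this to be the only subtle step to get right.
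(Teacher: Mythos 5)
Your proof is correct. The affineness argument coincides with the paper's, which takes $H' = \cl(H \cup \{x+y\})$ for any $y \in G \setminus H$. For $I_s$-freeness the paper proceeds differently: since $E' \setminus E = x + [H]$ is a coset of $[H]$, any three distinct points of it sum to a fourth point of it, so at most two points of the putative basis lie in $E' \setminus E$; hence at least $s-2 \ge 2$ lie in $E$, and for basis points $u,v \in E$ and a basis point $z \in E' \setminus E$ one has $u+v \in [H]$, whence $u+v+z \in x+[H] \subseteq E'$ is an extra point of $E'$ in the closure of the basis. Your argument instead never uses the basis points lying in $E$: you show that $(x+[H]) \cap F$, when nonempty, is an entire coset of $[H] \cap [F_0]$, of size $2^{j'}$ with $j' \ge s-2 \ge 2$ by the modular inequality, which is too large to be independent inside a span of dimension $j'+1$. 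Both proofs rest on the same underlying fact that the new points form a coset of a large subspace, but the paper's version is a shorter three-element computation, while yours isolates exactly where $s \ge 4$ enters (the inequality $2^{j'} \le j'+1$ fails precisely for $j' \ge 2$) and explicitly disposes of the cases $F \subseteq G$ and $I = \varnothing$, which the paper's write-up leaves implicit: when it says ``pick any arbitrary $z \in E' \setminus E$,'' the element $z$ must in fact be taken in $F$ itself for the final contradiction to land in $\cl(F)$.
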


\begin{proof}
Pick any $y \in G \del H$, and let $H' = \cl(H \cup \{x+y\})$. Then $H'$ is a hyperplane of $G'$ for which $H' \cap E = \varnothing$, so $M'$ is affine. 

Now, suppose towards a contradiction that $M$ is $I_s$-free but $M'$ is not, so that there exist $F=\{v_1, \cdots, v_s\} \subseteq E'$ for which $M' \rvert \cl(F) \cong I_s$. Note that for any distinct three elements $x,y,z \in E' \del E$, we have that $x+y+z \in E'$, so $|E' \cap F| \leq 2$. Hence $|E \cap F| \geq 2$. Let $x,y \in E \cap F$, and pick any arbitrary $z \in E' \del E$. But then $x+y+z \in E'$, a contradiction.
\end{proof}

\subsection*{The matroid $\SAG(n-1,2)$} Let $n \geq 3$. Then $\SAG(n-1,2)$ is the $(n+1)$-dimensional matroid $M = (E,G)$ with nested hyperplanes $H_0 \subseteq G_0 \subseteq G$ and $x \in G \del G_0$, $y \in G_0 \del H_0$ for which $E = G_0 \del H_0 \triangle \{x,y,x+y\}$. In matroid terminology, $\{x, x+y\}$ is a \emph{series pair}; any basis of $E$ must include either $x$ or $x+y$. Moreover, the matroid $\SAG(n-1,2)$ is a \emph{series extension} of $\AG(n-1,2)$; contracting the element $x$ or $x+y$ gives the matroid $\AG(n-1,2)$. Moreover, if $n \geq 4$, then $\{x, x+y\}$ is the unique series pair. Note that $\SAG(n-1,2)$ always contains an induced $C_5$-restriction. It is easy to verify that $\SAG(n-1,2)$ is $I_4$-free, triangle-free, and has critical number exactly $2$.

\subsection*{$AI_4$-freeness} For a matroid $M=(E,G)$, we say that $M$ is \emph{$AI_4$-free} if for any basis $ \{x_1, x_2, x_3, x_4\} \subseteq E$ of a four-dimensional subgeometry of $G$, there exists some $i \in \{1,2,3,4\}$ such that $\sum_{j \neq i}x_j \in E$, It is useful to note that $AI_4$-freeness is preserved under complementation, and this fact will be used without reference.

\begin{lemma}\label{AI4_complement}
A matroid $M=(E,G)$ is $AI_4$-free if and only if $M^c$ is $AI_4$-free.
\end{lemma}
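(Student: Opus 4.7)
The plan is to exploit the involutive nature of complementation: since $(M^c)^c = M$, the two directions of the equivalence are interchangeable, so it suffices to prove the contrapositive of just one of them. I will show that if $M^c$ is not $AI_4$-free, then $M$ is not $AI_4$-free.

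Suppose we are given a basis $\{y_1, y_2, y_3, y_4\} \subseteq G \setminus E$ of a $4$-dimensional flat $F$ witnessing that $M^c$ is not $AI_4$-free; that is, $t_i := \sum_{j \neq i} y_j \in E$ for every $i \in \{1,2,3,4\}$. The entire argument turns on the following duality inside a $4$-dimensional $\bF_2$-flat: the four singletons $y_i$ and the four ``triples'' $t_i$ stand in symmetric relation to one another. I would make this precise by two short $\bF_2$-calculations. First, one verifies $\{t_1,t_2,t_3,t_4\}$ is again a basis of $F$: their sum is $3(y_1+y_2+y_3+y_4) = y_1+y_2+y_3+y_4$, and direct expansion shows $y_i$ lies in their span. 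Second, one computes $\sum_{j \neq i} t_j = y_i$ for each $i$, using only that $2v = 0$ in $\bF_2^n$.

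Given these two facts, $\{t_1,t_2,t_3,t_4\} \subseteq E$ is a basis of the $4$-dimensional flat $F$ for which every triple-sum $\sum_{j\neq i} t_j = y_i$ lies in $G \setminus E$. This directly contradicts the $AI_4$-freeness of $M$, completing the contrapositive and hence the lemma.

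I do not anticipate any genuine obstacle: the content of the lemma is exactly the observation that the map sending a basis $\{y_1,\ldots,y_4\}$ of a $4$-dimensional flat to the ``complementary-sum'' basis $\{t_1,\ldots,t_4\}$ is an involution which swaps the roles of singletons and triple-sums. Once this is recognised, $AI_4$-freeness and its complementary version translate into each other under a single basis change, and the proof becomes almost automatic.
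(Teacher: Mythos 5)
Your proposal is correct and is essentially the paper's own argument: both pass from a violating basis to the complementary-sum basis $\{t_1,\dots,t_4\}$ (the paper's $\{w_1,\dots,w_4\}$) and use the $\bF_2$ identity $\sum_{j\ne i} t_j = y_i$, relying on the symmetry of the statement to handle only one direction. Your version merely proves the implication in the opposite direction and spells out the basis verification slightly more explicitly.
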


\begin{proof}
Let $B = \{x_1, x_2, x_3, x_4\} \subseteq E$ be an independent set for which $\sum_{j \neq i}x_j \notin E$ for all $ 1 \leq i \leq 4$. Then consider the independent set $B' = \{w_1, w_2, w_3, w_4\}$ of $G \del E$ where $w_i = \sum_{j \neq i}x_j$. Then $\sum_{j \neq i}w_j \in E$ for $1 \leq i \leq 4$.
\end{proof}

\subsection*{Triangle-freeness}

We will use the following two results concerning triangle-freeness. 

\begin{lemma}\label{AI4_tri_free}
If a matroid $M=(E,G)$ is $AI_4$-free and triangle-free, then $M$ is affine.
\end{lemma}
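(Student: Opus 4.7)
The plan is to invoke Theorem~\ref{affine_circuit_characterisation}, reducing the claim to showing that an $AI_4$-free, triangle-free matroid $M=(E,G)$ has no induced odd circuit. Note that we cannot appeal to Corollary~\ref{affine_I4_tri_free} directly, since $AI_4$-freeness is strictly weaker than $I_4$-freeness; the work is to leverage the presence of an odd circuit to manufacture an explicit violation of $AI_4$-freeness.

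First I would observe that triangle-freeness disposes of the case $C_3$. So suppose for contradiction that $M$ has an induced $C_{2k+1}$-restriction for some $k\ge 2$, witnessed by points $v_1,\dots,v_{2k+1}\in E$ with $v_1+\dots+v_{2k+1}=0$, spanning a $(2k)$-dimensional flat $F=\cl(\{v_1,\dots,v_{2k+1}\})$ in which $E\cap F=\{v_1,\dots,v_{2k+1}\}$. Since $C_{2k+1}$ has rank $2k$ on $2k+1$ points, the space of linear relations among the $v_i$ is one-dimensional, generated by $\sum_i v_i=0$; in particular any four of the $v_i$ are linearly independent.

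Next I would choose any four of them, say $\{v_1,v_2,v_3,v_4\}\subseteq E$, and let $F'=\cl(\{v_1,v_2,v_3,v_4\})\subseteq F$, a four-dimensional flat with these four elements as a basis. For each $i\in\{1,2,3,4\}$, the element $w_i=\sum_{j\in\{1,2,3,4\}\setminus\{i\}} v_j$ lies in $F'\subseteq F$. If $w_i$ equalled some $v_\ell$, then we would obtain a linear dependence among at most five of the $v_j$'s distinct from the full-sum relation; by the one-dimensionality of the relation space this forces $2k+1\le 4$, contradicting $k\ge 2$. Hence $w_i\notin\{v_1,\dots,v_{2k+1}\}=E\cap F$, so $w_i\notin E$ for each $i$. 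This directly violates $AI_4$-freeness applied to the basis $\{v_1,v_2,v_3,v_4\}$.

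Thus $M$ has no induced odd circuit, so $M$ is affine by Theorem~\ref{affine_circuit_characterisation}. The main obstacle is conceptual rather than computational: recognising that $AI_4$-freeness, while weaker than $I_4$-freeness, is still strong enough to exclude odd circuits of length at least $5$, because in any such circuit the uniqueness of the sum-to-zero relation forces every four-element subset to certify the failure of the $AI_4$ condition.
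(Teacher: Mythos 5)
Your proof is correct and follows the same route as the paper: reduce via Theorem~\ref{affine_circuit_characterisation} to excluding induced odd circuits, with triangle-freeness handling $C_3$ and $AI_4$-freeness handling $C_{2k+1}$ for $k\ge 2$. The paper simply asserts the latter exclusion without detail; your argument (any four elements of an induced long odd circuit form a basis of a $4$-dimensional flat whose three-element sums all avoid $E$, by uniqueness of the sum-to-zero relation) is exactly the justification it leaves implicit.
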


\begin{proof}
As $M$ is $AI_4$-free, it contains no induced odd circuits of length $5$ or more. Therefore $M$ contains no induced odd circuits and is affine by Theorem \ref{affine_circuit_characterisation}.
\end{proof}

\begin{lemma}[{[\ref{bkknp}], Corollary 5.2}]\label{I_3_triangle_free}
If $M=(E,G)$ is both triangle-free and $I_3$-free, then $(E, \cl(E))$ is an affine geometry.
\end{lemma}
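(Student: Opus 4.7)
The plan is to translate $E$ by one of its own elements and show that the result, together with $0$, is a linear subspace. The key extraction of the hypotheses is the following: for any three distinct $x_1, x_2, x_3 \in E$, the sum $x_1 + x_2 + x_3$ lies in $E$. Indeed, triangle-freeness prevents $x_i + x_j \in E$ for $i \neq j$ and forces $x_1 + x_2 + x_3 \neq 0$, so $\{x_1, x_2, x_3\}$ spans a three-dimensional flat $F$, and the only point of $F$ that could be added to $\{x_1, x_2, x_3\}$ inside $E \cap F$ is $x_1 + x_2 + x_3$; since $M$ is $I_3$-free, this point must in fact lie in $E$.

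Assuming $E$ is nonempty (otherwise the conclusion is vacuous), fix $x_0 \in E$ and set $V := (x_0 + E) \cup \{0\}$, regarded as a subset of the ambient $\bF_2$-vector space underlying $G$. I would verify that $V$ is closed under addition. Given $a, b \in V$, the cases $a = 0$, $b = 0$, or $a = b$ give $a + b \in V$ immediately; otherwise $x_0, x_0 + a, x_0 + b$ are three distinct elements of $E$, so the observation from the previous paragraph yields $x_0 + a + b \in E$, whence $a + b \in x_0 + E \subseteq V$. Since $V$ contains $0$ and is closed under addition, it is a linear subspace.

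Finally, let $U := V + \bF_2 x_0$. Because $0 \notin E$ we have $x_0 \notin V$, so $V$ is a hyperplane of $U$ and the sum is direct. A direct check gives $E = x_0 + (V \setminus \{0\}) = U \setminus V$, and that $E$ spans $U$, so $\cl(E) = U \setminus \{0\}$ as a projective geometry. This exhibits $E$ as the complement of the hyperplane $V \setminus \{0\}$ in $\cl(E)$, so $(E, \cl(E))$ is an affine geometry. The only real care required is in the closure argument — ensuring the three elements $x_0, x_0+a, x_0+b$ are genuinely distinct so the initial observation applies — and this is exactly what the case split on $a$ and $b$ accomplishes.
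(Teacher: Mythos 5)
Your proof is correct and self-contained; note that the paper does not prove this lemma at all but imports it as Corollary 5.2 of [2], so there is no in-paper argument to compare against. Your two steps — (i) triangle-freeness plus $I_3$-freeness force $x_1+x_2+x_3\in E$ for any three distinct $x_1,x_2,x_3\in E$, and (ii) therefore $[x_0+E]$ is closed under addition, exhibiting $E$ as the complement of a hyperplane in its closure — are both sound, and the case split guaranteeing distinctness of $x_0$, $x_0+a$, $x_0+b$ is exactly the care the closure step needs. This is also in the same spirit as the paper's own proof of Theorem~\ref{affine_circuit_characterisation} (showing $\cl(B+B)\cap E=\varnothing$), so nothing here is out of character for the paper.

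One cosmetic slip in the last paragraph: with your definition $V=(x_0+E)\cup\{0\}$ one has $x_0+V=E$ (since $x_0\in E$ already puts $0$ in $x_0+E$), whereas $x_0+(V\setminus\{0\})=E\setminus\{x_0\}$. So the chain should read $E=x_0+V=U\setminus V$; the displayed middle term is off by the single element $x_0$. The conclusion $E=U\setminus V$, which is what the affine-geometry identification actually uses, is correct as stated.
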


\section{The non-affine case}

In this section, we will consider the non-affine $I_4$-free, triangle-free matroids. The goal will be to prove the following.

\begin{theorem}\label{affine_or_cn2}
If $M = (E,G)$ is an $I_4$-free, triangle-free matroid, then either $M$ is affine, or $(E, \cl(E)) \cong D^{k}(\SAG(n-1,2))$ for $n \geq 3$ and $k \geq 0$. 
\end{theorem}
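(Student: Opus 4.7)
After reducing to the full-rank case by replacing $M$ with $(E, \cl(E))$, my plan is to induct on $n := \dim(G) - 1$. Since $M$ is non-affine, Corollary~\ref{affine_I4_tri_free} furnishes an induced $C_5$-restriction $Y = \{x_1, \dots, x_5\} \subseteq E$ with $\sum_i x_i = 0$. The base case $n = 3$ is immediate: $\cl(Y) = G$, and any $y \in E \setminus Y$ would equal $x_i + x_j$ for some $i \neq j$, producing the triangle $\{y, x_i, x_j\}$; hence $E = Y$ and $M \cong C_5 \cong \SAG(2,2) = D^0(\SAG(2,2))$.

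For $n \geq 4$, the plan is to prove the dichotomy: \emph{either $M$ admits a doubling element $w \in G \setminus E$ (so that $w + E = E$), or $M \cong \SAG(n-1, 2)$}. The dichotomy suffices: in the first case, for any hyperplane $H$ avoiding $w$, the matroid $M_0 := M \rvert H$ is full-rank in $H$, $I_4$-free, triangle-free (as an induced restriction), and non-affine since $\chi(M_0) = \chi(M) \geq 2$ by Lemma~\ref{doubling_prop}, so induction yields $M_0 \cong D^{k-1}(\SAG(m-1,2))$ and hence $M = D(M_0) \cong D^k(\SAG(m-1, 2))$.

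To establish the dichotomy, I would fix any $z \in E \setminus \cl(Y)$ (which exists since $E$ is full-rank) and analyze the $5$-dimensional flat $F := \cl(Y \cup \{z\})$. Triangle-freeness forces $z + x_i \notin E$ for every $i$, and $I_4$-freeness applied to each basis $\{z, x_a, x_b, x_c\} \subseteq E$ forces some $z + x_i + x_j$ to lie in $E$ with $\{i,j\} \subseteq \{a,b,c\}$ or $\{i,j\} = \{1,\dots,5\} \setminus \{a,b,c\}$. Setting $\cI := \{ \{i,j\} : z + x_i + x_j \in E \}$, triangle-freeness makes $\cI$ an intersecting family on $\{1,\dots,5\}$ (two disjoint pairs $\{i,j\}, \{k,l\} \in \cI$ would yield the triangle $\{z+x_i+x_j,\, z+x_k+x_l,\, x_m\}$ with $\{m\} = \{1,\dots,5\}\setminus\{i,j,k,l\}$), and combining this with the ten 3-subset $I_4$-constraints pins $\cI$ down to either a \emph{star} $\{\{i_0,j\} : j \neq i_0\}$ (Case A) or a \emph{triangle} $\binom{\{i,j,k\}}{2}$ (Case B). A direct computation then identifies Case A with $D(\SAG(2,2))$ on $F$, with doubling element $w := z + x_{i_0}$, and Case B with $\SAG(3,2)$ on $F$, whose series pair is $\{x_l, x_m\}$ for $\{l,m\} = \{1,\dots,5\}\setminus\{i,j,k\}$.

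The main obstacle---where the bulk of the proof lies---is promoting the local structure on $F$ to the whole matroid. In Case A, one must show that $w$ is a global doubling element of $M$: for every $z' \in E \setminus F$, $w + z' \in E$. This requires a parallel analysis of $\cl(Y \cup \{z'\})$ and the $6$-dimensional flat $\cl(Y \cup \{z, z'\})$, combined to show the Case A assignment is uniform across all choices of $z'$. In Case B, one must show that the series pair $\{x_l, x_m\}$ and excluded element $y := x_l + x_m$ are universal: $E$ sits in a hyperplane $G_0$ of $G$ with $E \cap G_0 = (G_0 \setminus H_0) \setminus \{y\}$ for some hyperplane $H_0 \subseteq G_0$ and $E \setminus G_0 = \{x_l, x_m\}$. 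Both threads hinge on iterated applications of $I_4$- and triangle-freeness to 4-bases lying in overlapping 5- and 6-dimensional flats; weaving these local constraints into a single consistent global structure is where the genuine combinatorial work lies.
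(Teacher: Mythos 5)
Your outline follows the same overall strategy as the paper: induct on dimension, seed with an induced $C_5$-restriction $Y$ (via Corollary~\ref{affine_I4_tri_free}), and establish a dichotomy between ``$M$ is a doubling'' (recurse) and ``$M \cong \SAG(n-1,2)$'' (terminate). Your local analysis of the $5$-dimensional flat $\cl(Y \cup \{z\})$ is also sound: the family $\cI$ is indeed forced to be a full star or a full triangle, and these do correspond to a doubled $C_5$ (with doubling element $z+x_{i_0} \notin E$) and to $\SAG(3,2)$ respectively.

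The problem is that the proof stops exactly where the theorem's content begins. Both ``promotion'' steps that you defer are the substance of the paper's argument, and neither is routine enough to be waved through. The Case~A promotion is the paper's Lemma~\ref{doubled_C5}: given a doubled $C_5$ on a flat with doubling element $w$, one shows $w+z' \in E$ for every $z' \in E$ by exhibiting explicit induced $I_4$'s (first ruling out $x_1+x_2+x_3+z' \in E$ for triples in $Y \cap E$, then concluding $\{x_1,x_2,x_3,z'\}$ is an induced $I_4$ if $w+z' \notin E$); this is short but it is an actual argument, not just ``a parallel analysis,'' and it must also handle the possibility that a different $z'$ lands in Case~B or in Case~A with a different star centre. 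The Case~B promotion is the paper's Lemma~\ref{series_AG_extend}, which occupies most of the section: it is a delicate case analysis (split on whether some $z \in E \setminus H$ has $y+z \in E$, and on $\dim(F')$, with sub-cases tracked through a partition $X_0, X_1, X_2$ of $F'$) whose conclusion is not simply ``$M \cong \SAG(n-1,2)$'' but a trichotomy that also allows $E \subseteq H$ or $M$ a doubling --- so even the shape of your claimed dichotomy at each inductive step needs care. Note too that the paper only ever extends by one dimension from a hyperplane already known (by induction) to be $D^k(\SAG(m-1,2))$, whereas as written you propose to leap from $\SAG(3,2)$ on a $5$-dimensional flat directly to the global structure; that is a strictly harder version of the already-hard step. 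As it stands, the proposal is a correct plan with the two key lemmas unproven, and hence is not a proof.
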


The proof is by induction on $\dim(M)$. We will first prove the following lemma, which describes the case when we can find a hyperplane $H$ such that $M \rvert H$ contains the doubling of an induced $C_5$-restriction. This condition turns out to be very strong, as it implies that the matroid $M$ is also a doubling. 

\begin{lemma}\label{doubled_C5}
Let $M = (E,G)$ be an $I_4$-free, triangle-free matroid with a flat $F$ for which $M \rvert F \cong C_5$. Let $w \in E \del F$. If $M \rvert \cl(F \cup \{w\})$ is the doubling of $M \rvert F$ with respect to $w$, then $M$ is also a doubling with respect to $w$. 
\end{lemma}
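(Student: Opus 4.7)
The plan is a proof by contradiction. Set $E \cap F = \{e_1, \ldots, e_5\}$ with $e_1 + \cdots + e_5 = 0$; the doubling hypothesis gives $E \cap \cl(F \cup \{w\}) = \{e_i, w + e_i : 1 \le i \le 5\}$ and $w \notin E$. It suffices to show $e + w \in E$ for each $e \in E \setminus \cl(F \cup \{w\})$, so fix such an $e$ and suppose for contradiction that $e + w \notin E$. Triangle-freeness applied to $\{e, e_i, e + e_i\}$ and $\{e, w + e_i, e + w + e_i\}$ yields $e + e_i, e + w + e_i \notin E$ for every $i$. Define $S = \{\{i, j\} \subseteq \{1, \ldots, 5\} : e + e_i + e_j \in E\}$; the identity $e_i + e_j + e_k + e_l = e_m$ (whenever $\{i, j, k, l, m\} = \{1, \ldots, 5\}$) shows that any two disjoint pairs in $S$ would give a triangle $\{e + e_i + e_j, e + e_k + e_l, e_m\}$ in $E$, so $S$ is an intersecting family in $K_5$ and is therefore contained in either a star or a triangle.

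The central use of $I_4$-freeness is this: for any $3$-subset $\{i, j, k\}$, the independent $4$-set $\{e, w + e_i, w + e_j, w + e_k\} \subseteq E$ spans a $4$-flat in which every element other than those of the form $e + e_{i'} + e_{j'}$ (with $\{i', j'\} \subseteq \{i, j, k\}$) or the specific element $e + w + e_l + e_m$ (where $\{l, m\} = \{1, \ldots, 5\} \setminus \{i, j, k\}$) is already known not to lie in $E$ (the $e_p + e_q$'s lie in $F$ but are not among the $e_r$'s; $w + e_i + e_j + e_k$ lies in $\cl(F\cup\{w\})$ but is not of the form $w+e_r$; and $e+w+e_r$ is excluded by triangle-freeness). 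Hence $I_4$-freeness forces either $\{i',j'\} \in S$ for some $\{i', j'\} \subseteq \{i, j, k\}$, or $e + w + e_l + e_m \in E$.

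Now split on the structure of $S$. Suppose first that $S$ is contained in a star centred at some vertex $v$. Applying the preceding paragraph to each $3$-subset $\{i, j, k\} \subseteq \{1, \ldots, 5\} \setminus \{v\}$ (none of which contains a star-edge) forces $e + w + e_v + e_x \in E$ for every $x \ne v$. Pitting these four elements against any putative $e + w + e_i + e_j$ with $\{i, j\} \not\ni v$ by triangle-freeness (choose $x \in \{1, \ldots, 5\} \setminus \{v, i, j\}$ so that $\{v, x\}$ and $\{i, j\}$ are disjoint and sum to some $e_n \in E$) shows that $e + w + e_i + e_j \notin E$ for every pair $\{i, j\} \not\ni v$. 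If $S = \emptyset$, the same analysis applied to any $3$-subset $\{v, i, j\}$ requires $e + w + e_k + e_l \in E$ for the complementary pair $\{k, l\} \not\ni v$, contradicting the preceding claim. Otherwise some $\{v, t\} \in S$; setting $\{l, m, n\} := \{1, \ldots, 5\} \setminus \{v, t\}$ and expanding the $4$-flat spanned by $\{e + e_v + e_t, w + e_l, w + e_m, w + e_n\}$ via $\sum e_i = 0$ shows that the only remaining candidates for the extra element of $E$ are $e + w + e_p + e_q$ with $\{p, q\} \subseteq \{l, m, n\} \not\ni v$, each already forbidden; this produces an induced $I_4$-restriction, contradicting $I_4$-freeness.

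The remaining case, in which $S$ is a triangle on some $\{a, b, c\}$, is symmetric: the three $3$-subsets containing exactly one vertex of $\{a, b, c\}$ force $e + w + e_y + e_z \in E$ for every $\{y, z\} \subseteq \{a, b, c\}$, and then with $\{l, m\} = \{1, \ldots, 5\} \setminus \{a, b, c\}$ the $4$-flat spanned by $\{e + e_a + e_b, w + e_c, w + e_l, w + e_m\}$ has its only possible extras of the form $e + w + e_p + e_q$ with $\{p, q\} \subseteq \{c, l, m\}$, each of which forms a triangle with $e + w + e_a + e_b \in E$ (any such $\{p, q\}$ is disjoint from $\{a, b\}$, with sum some $e_n \in E$). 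This yields the final induced $I_4$-restriction and hence a contradiction. The main obstacle throughout is the combinatorial bookkeeping inside the $4$-flat expansions; the observation that $S$ must be intersecting—a consequence of the rigidity of the $C_5$-structure on $F$—is what keeps the case split manageable, and the identity $\sum e_i = 0$ is the algebraic engine driving the triangle-based eliminations.
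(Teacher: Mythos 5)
Your argument is correct: $S$ is indeed intersecting, the star/triangle dichotomy is exhaustive, and each of the claimed triangles and induced $I_4$-restrictions checks out (I verified the spans of your three $4$-sets). But the whole case analysis is an avoidable detour, and the paper's proof is two steps. The observation you miss is that $e + e_i + e_j \in E$ can be refuted \emph{directly}: assuming it, take $B = \{e+e_i+e_j,\ w+e_k,\ w+e_l,\ w+e_m\}$ with $\{k,l,m\}$ the complementary triple --- exactly the $4$-set you deploy at the end of each case. The only points of $\cl(B)$ whose membership in $E$ is not already forced are the pairwise sums $(e+e_i+e_j)+(w+e_p) = e+w+e_q+e_r$ with $\{q,r\} = \{k,l,m\}\setminus\{p\}$; and each of these, if in $E$, forms the triangle $\{e+e_i+e_j,\ e+w+e_q+e_r,\ w+e_p\}$ whose other two elements are already known to lie in $E$. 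So $B$ is an induced $I_4$ and $S = \varnothing$ outright, with no need to first determine which points $e+w+e_q+e_r$ lie in $E$. Once $S = \varnothing$, the set $\{e, e_1, e_2, e_3\}$ is itself an induced $I_4$ (its triple sums $e+e_i+e_j$ and $e_1+e_2+e_3 = e_4+e_5$, and its full sum $e+e_4+e_5$, are all excluded), which is the paper's closing step. In short: same strategy --- a contradiction via induced $I_4$-restrictions inside $\cl(F \cup \{w,e\})$, built from the same two families of $4$-sets --- but your intersecting-family machinery substitutes for a one-line triangle observation. One minor point: the lemma as printed says $w \in E \setminus F$, which is inconsistent with $w$ being the doubling element; your reading ($w \notin E$, $E \cap \cl(F \cup \{w\}) = \{e_i, w+e_i\}$) is the one consistent with how the lemma is proved and applied.
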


\begin{proof}
If $\dim(M) = 5$, then the result follows trivially, so suppose that $\dim(M) > 5$. 

Suppose for a contradiction that $M$ is not a doubling with respect to $w$. This implies that there exists $z \in E \del \cl(F \cup \{w\})$ such that $w + z \notin E$. We will now show that the $6$-dimensional matroid $M \rvert \cl(F \cup \{w, z\})$ contains an induced $I_4$-restriction.

\begin{claim}
Let $\{x_1, x_2, x_3\}$ be any three distinct elements of $F \cap E$. Then $x_1+x_2+x_3+z \notin E$.
\end{claim}

\begin{subproof}
Suppose not, so that $x_1+x_2+x_3+z \in E$; then the set $\{x_1+x_2+x_3+z, w+x_1, w+x_2, w+x_3\}$ is an induced $I_4$-restriction. 
\end{subproof}

We now fix a basis $\{x_1, x_2, x_3, x_4\}$ of $F \cap E$. Let $x_5 = x_1 + x_2 + x_3 + x_4$, so that $F \cap E = \{x_1, x_2, x_3, x_4, x_5\}$. We may apply the above claim with $\{x_1, x_2, x_3\}$, $\{x_1,x_4,x_5\}$, $\{x_2,x_4,x_5\}$ and $\{x_3,x_4,x_5\}$ to obtain that $x_1+x_2+x_3+z, x_2+x_3+z, x_1+x_3+z, x_1+x_2+z \notin E$. Since $x_1+x_2+x_3 \notin E$, this implies that $\{x_1, x_2, x_3, z\}$ is an induced $I_4$-restriction, a contradiction. This completes the proof.
\end{proof}

The next lemma describes the situation in which there is a hyperplane $H$ of $G$ such that $M \rvert H$ is $\SAG(n-1,2)$. This case turns out to be harder and requires a detailed case analysis.

\begin{lemma}\label{series_AG_extend}
Let $M = (E,G)$ be an $n$-dimensional, $I_4$-free, triangle-free matroid, $n \geq 5$. If $G$ has a hyperplane $H$ so that $M \rvert H \cong \SAG(n-3,2)$, then either
\begin{itemize}
	\item $E \subseteq H$,
	\item $M$ is a doubling of $M \rvert H$, or 
	\item $M \cong \SAG(n-2,2)$.
\end{itemize}
\end{lemma}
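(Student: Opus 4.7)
The plan is to assume $E \not\subseteq H$ and show that $M$ is either a doubling of $M \rvert H$ or isomorphic to $\SAG(n-2,2)$. Parameterise $M \rvert H \cong \SAG(n-3,2)$ by nested hyperplanes $H_0 \subseteq G_0 \subseteq H$ together with $y \in G_0 \setminus H_0$ and $x \in H \setminus G_0$, so that $E \cap H = (G_0 \setminus H_0 \setminus \{y\}) \cup \{x, x+y\}$. Fix $w_0 \in W := E \setminus H$ and set $A^* = \{w_0 + w : w \in W \setminus \{w_0\}\}$. Triangle-freeness gives $A^* \subseteq H \setminus E$ and $w_0 + u \notin E$ for every $u \in E \cap H$; note that the doubling outcome is equivalent to $A^* = (u_0 + (E \cap H)) \setminus \{0\}$ for some $u_0 \in E \cap H$.

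The main tool is $I_4$-freeness applied to $4$-independent sets $\{w_0, w_0 + a, u_1, u_2\}$, where $a \in A^*$ and $u_1, u_2 \in E \cap H$. After deleting the sums forced out of $E$ by triangle-freeness and by $a \in H \setminus E$, the condition becomes: the $3$-dimensional flat $\cl(\{a, u_1, u_2\})$ contains a point of $(E \cap H) \cup A^*$ distinct from $a, u_1, u_2$. Specialising this to the tuple $(w_0, x, x+y, y+s)$ for $s \in H_0$ shows that at least one of $w_0 + y$, $w_0 + s$, $w_0 + x + s$, $w_0 + x + y + s$ lies in $W$.

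I then split on whether $w_0 + y \in W$. In the affirmative case, further $I_4$-constraints that now use $w_0 + y$ itself as a witness force $A^*$ to equal either $\{y\} \cup ((x+y) + H_0)$ or $\{y\} \cup (x + H_0)$, corresponding to the symmetry $x \leftrightarrow x+y$ of the series pair; in either case $M$ is a doubling of $M \rvert H$ by $w_0 + x$ or by $w_0 + (x+y)$. In the negative case, the same machinery forces either $A^* = H_0$, or $A^* = (H_0 \setminus \{s_0\}) \cup \{x + s_0, x + y + s_0\}$ for a unique $s_0 \in H_0$. In the latter subcase, $M$ is the doubling of $M \rvert H$ by $w_0 + y + s_0$. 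In the case $A^* = H_0$ it remains to verify $M \cong \SAG(n-2,2)$ directly: with the parametrisation $G_0' = \cl(H_0 \cup \{w_0, y\})$, $H_0' = \cl(H_0 \cup \{w_0 + y\})$, $y' = y$, $x' = x$, a direct computation shows that $E = (G_0' \setminus H_0' \setminus \{y'\}) \cup \{x', x' + y'\}$.

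The main obstacle lies in the second case, namely ruling out hybrid configurations of $A^*$ that pick up elements from each of $H_0$, $x + H_0$, and $x + y + H_0$ in proportions matching neither a doubling nor the $\SAG(n-2,2)$ shape. This requires pursuing several simultaneous $I_4$-constraints — each one using a previously identified element of $A^*$ as the witness $a$ in a new $4$-set, and exploiting the $x \leftrightarrow x+y$ symmetry of the series pair of $M \rvert H$ — and this bookkeeping is where the bulk of the proof sits.
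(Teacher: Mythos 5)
Your setup is sound and your overall strategy is essentially the paper's: parameterise $M\rvert H$ by the series pair $\{x,x+y\}$ and the affine part $y+H_0$, apply $I_4$-freeness to four-sets mixing two points of $E\setminus H$ with two of $E\cap H$, and split on whether $y$ lifts (i.e.\ whether some $z\in E\setminus H$ has $z+y\in E$). The derived constraint on $\cl(\{a,u_1,u_2\})$ and the translation of the doubling outcome into the condition $A^*=(u_0+(E\cap H))\setminus\{0\}$ are both correct. However, there are two problems. First, the decisive step --- showing that the constraints admit only the configurations you list --- is not carried out; you locate where the bookkeeping lives but do not do it, so as it stands this is a plan rather than a proof.

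Second, and more seriously, the claimed terminal configurations are wrong when $n=5$, i.e.\ when $\dim H_0=2$. Take $H_0=\{s,s',s+s'\}$ and consider $E\setminus H=z+[T]$ where $T=\{y,\,x+s,\,x+y+s\}$ is a triangle of $H$ disjoint from $E$. Then for every choice of $w_0\in E\setminus H$ one gets $A^*=T$, which contains $y$ (so you are in your affirmative case) but has only $3$ elements, whereas both of your claimed affirmative outcomes $\{y\}\cup(x+H_0)$ and $\{y\}\cup((x+y)+H_0)$ have $4$. This configuration is genuinely realisable: it is exactly the matroid $\SAG(3,2)$ presented so that the hyperplane $H$ meets it in a $C_5=\SAG(2,2)$, and it satisfies every constraint you derive (in particular the constraint from $(w_0,x,x+y,y+s')$ is met because $y\in A^*$). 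It is not a doubling ($|E|=9$ is odd), so your affirmative case cannot terminate in ``doubling by $w_0+x$ or $w_0+(x+y)$''; it must also produce $\SAG(n-2,2)$ via a series pair that is not $\{x,x+y\}$. The analogous failure occurs in your negative case: $A^*=\{s,\,x+s',\,x+s+s'\}$ is realisable and is neither $H_0$ nor of the form $(H_0\setminus\{s_0\})\cup\{x+s_0,x+y+s_0\}$. Any completion of your argument therefore has to treat $\dim H_0=2$ separately from $\dim H_0>2$ (the latter is where arguments of the form ``one element of a coset of $H_0$ in $A^*$ forces the whole coset in'' are available, since they need three independent directions inside $H_0$); for $n\ge 6$ your claimed dichotomies look plausible, but for $n=5$ they are false as stated.
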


\begin{proof}
Since $M \rvert H \cong \SAG(n-3,2)$, there exist a hyperplane $F$ of $H$, a hyperplane $F'$ of $F$, and elements $W = \{w_0, w_1\} \subseteq (H \cup E) \del F$ so that $M \rvert H = ( W \cup (y+F'),H)$ where $w_0+w_1=y$. We consider the following cases.

\textbf{Case 1}: There exists $z \in E \del H$ for which $y+z \in E$.

We will make a series of straightforward observations to help understand the structure of $M$.

\begin{claim}\label{I_4_ext_case2_claim1}
\
$(z+F) \cap E = \{y+z\}$
\end{claim}

\begin{subproof}
If $x \in F \del (F' \cup \{y\})$, then clearly $x+z \notin E$ as $\{x,z,x+z\}$ would be a triangle. If there exists $x \in F'$ such that $x+z \in E$, then $\{y+z, x+z, x+y\}$ is a triangle. 
\end{subproof}

\begin{claim}\label{I_4_ext_case2_claim2}
For any triangle $T = \{x_0, x_1, x_0+x_1\} \subseteq F'$ and $w \in W$, $|(z+w+\{x_0+y, x_1+y, x_0+x_1\}) \cap E| > 0$. 
\end{claim}

\begin{subproof}
If not, then $\{w ,z, x_0+y, x_1+y\}$ is an induced $I_4$-restriction. 
\end{subproof}

\begin{claim}\label{I_4_ext_case2_claim3}
For any distinct $x, x' \in F'$, $|\{x+z+w_0 , x'+z+w_1 \} \cap E| <2$.  
\end{claim}

\begin{subproof}
If not, then $\{x+z+w_0, x' +z+w_1, x+x'+y\}$ is a triangle.
\end{subproof}

When $\dim(F') > 2$ we obtain the following observation.

\begin{claim}\label{I_4_ext_case2_claim4}
Provided that $\dim(F') > 2$, if there exists $w \in W$ for which $x_0+w+z \in E$ for some $x_0 \in F'$, then $F'+w+z \subseteq E$.
\end{claim}

\begin{subproof}
Let $x \in F' \del  \{x_0\}$. Since $\dim(F') > 2$, we may select $x_1, x_2 \in F'$ such that $x_0 \notin \cl(\{x_1, x_2\})$ and $x = x_0+x_1+x_2$. Then the set $\{z, x_0 + w+z, x_1+y, x_2+y\}$ is an induced $I_4$-restriction if $x+w+z \notin E$. Therefore $F' + w+z \subseteq E$. 
\end{subproof}

At this point, it is helpful to consider the cases when $\dim(F') > 2$ and $\dim(F')=2$ separately. 

\textbf{Case 1.1: $\dim(F') > 2$}.

We claim that $M$ is a doubling of $M \rvert H$. By \ref{I_4_ext_case2_claim2}, there exists $x \in F'$ and $w \in W$ for which $x+z+w \in E$, and by  \ref{I_4_ext_case2_claim4}, $w+z+F' \subseteq E$. By \ref{I_4_ext_case2_claim3}, $(F'+(y+w)+z) \cap E = \varnothing$. Hence, along with \ref{I_4_ext_case2_claim1}, we have that $E \del H = \{z, y+z\} \cup (w + z + F')$.
Recall that $E \cap H = \{w, w+y\} \cup (y + F')$. Therefore we have
	\begin{align*}
		E &= (E  \cap H) \cup (E \del H) \\
		&=  \{w, w+y\} \cup (y + F') \cup \{z, y+z\} \cup (w + z + F') \\
		&= [y+w+z] + (\{w, w+y\} \cup (y + F') ) = [y+w+z] + (E \cap H).
	\end{align*} 
Since $y+w+z \notin E$, we conclude that $M$ is a doubling of $M \rvert H$. 

\textbf{Case 1.2: $\dim(F') = 2$}.

By \ref{I_4_ext_case2_claim2}, there exists $x \in F'$ such that $x+z+w \in E$ for some $w \in W$. Write $F' = \cl(x,x')$ for $x' \in F'$. Note that $x+z+w \in E$ implies by \ref{I_4_ext_case2_claim3} that $(z+y+w+ F' \del \{x\}) \cap E = \varnothing$. 

If $x+z+y+w \in E$, then \ref{I_4_ext_case2_claim3} gives that $(z+w+ F' \del \{x\}) \cap E = \varnothing$. Hence $E \del H = \{z\} \cup (z+\{y, x+w, x+y+w\})$. Therefore

	\begin{align*}
		E &= (E  \cap H) \cup (E \del H) \\
		&= \{w, y+w\} \cup (y+\{x,x',x+x'\}) \cup \{z\} \cup (z+\{y, x+w, x+y+w\})\\
		&=\{y+x', y+x+x'\} \cup (x+ \cl(y,w+z, x+z)).
	\end{align*} 
	
Since $F'' = \cl(y,w+z, x+z)$ satisfies $F'' \cap E = \varnothing$, it follows that $M \cong \SAG(n-2,2)$

We may therefore assume that $x+z+y+w \notin E$. But then repeated application of \ref{I_4_ext_case2_claim3} implies that $x+w+F' \subseteq E$. The analysis is then identical to Case 1.1. This concludes Case 1.

\textbf{Case 2}: There exist no $z \in G \del H$ for which $\{z, y+z\} \subseteq E$.

Suppose that one of the conclusions, $E \subseteq H$, does not hold. We may select $z \in E \del H$. 

Again, we collect a series of straightforward facts to help understand the structure of $M$. 

\begin{claim}\label{I_4_ext_case1_claim1}
For any triangle $T \subseteq F'$, $T \cap E \neq \varnothing$.
\end{claim}

\begin{subproof}
If not, then $(y+T) \cup \{z\}$ is an induced $I_4$-restriction. 
\end{subproof}

\begin{claim}\label{I_4_ext_case1_claim2}
For any $x \in F'$, $x + z \notin E$ if and only if $(x+z + W) \cap E \neq \varnothing$. Moreover, in this case we have $|(x+z + W) \cap E| = 1$. 
\end{claim}

\begin{subproof}
To show the forward statement, if $(x+z + W) \cap E = \varnothing$, $\{x+y, w_0, w_1, z\}$ is an induced $I_4$-restriction. 

For the reverse direction, note that if $x+z \in E$, then $\{x+z, x+w+z, w\}$ would be a triangle for some $w \in W$.

Finally, since there exists no $t \in G \del H$ for which $\{t, y+t\} \subseteq E$, it follows that $(x+z + W) \cap E \neq \varnothing$ if and only if $|(x+z + W) \cap E| = 1$.
\end{subproof}

\begin{claim}\label{I_4_ext_case1_claim3}
For any distinct $x, x' \in F'$, $|\{x+w_0+z, x'+w_1+z\} \cap  E| < 2$.
\end{claim}

\begin{subproof}
If not, then $\{x+w_0+z, x' +w_1+z, x+x'+y\}$ is a triangle.
\end{subproof}

Let us write $X_0 = \{x \in F' \mid x + w_0 + z \in E\}$, $X_1 = \{x \in F' \mid x + w_1+ z \in E\}$ and $X_2 = \{x \in F' \mid  x+z \in E\}$. By \ref{I_4_ext_case1_claim1}, $X_2 \neq \varnothing$, and \ref{I_4_ext_case1_claim2} implies that $(X_0, X_1, X_2)$ partitions $F'$. Moreover, by  \ref{I_4_ext_case1_claim3}, $|X_0|  \neq 0$ if and only if $|X_1| = 0$. Moreover, when we restrict to a triangle, we have the following.

\begin{claim}\label{I_4_ext_case1_claim4_2}
If $T \subseteq F'$ is a triangle, then
\begin{enumerate}
	\item\label{I_4_ext_case1_claim4_a} if $|T \cap X_2|=3$, or
	\item\label{I_4_ext_case1_claim4_b} if $|T \cap X_2|=1$ and $|T \cap X_i|=2$ for some $i \in \{0,1\}$.
\end{enumerate}
\end{claim}

\begin{subproof}
By \ref{I_4_ext_case1_claim1}, $|T \cap X_2| > 0$. By \ref{I_4_ext_case1_claim2}, $(x+z+W)\cap E = \varnothing$.

Suppose (\ref{I_4_ext_case1_claim4_a}) does not hold, so that there exists $x' \in T$ such that $x'+z \notin E$. By \ref{I_4_ext_case1_claim2}, we know that there exists exactly one $w \in W$ for which $x'+z+w \in E$. 

If $x+x' + z \in E$ holds, then by \ref{I_4_ext_case1_claim2}, $(x+x'+z+W) \cap E = \varnothing$. But then, it follows that $\{x+y, x'+y, x+x'+y, x'+w+z\}$ is an induced $I_4$-restriction. Hence $x+x'+z \notin E$. 

By \ref{I_4_ext_case1_claim2} and \ref{I_4_ext_case1_claim3}, it follows that $x+x'+z+w \in E$ and $x+x'+z+y+w \notin E$, which is (\ref{I_4_ext_case1_claim4_b}).
\end{subproof}

The above claim will be enough to settle the case where $\dim(F')=2$. To handle the case $\dim(F')> 2$, the following observation will be useful. 

\begin{claim}\label{I_4_ext_case1_claim5}
For either $i \in \{0,1\}$, there exist no $3$-dimensional flat $F_0 \subseteq F'$ and a triangle $T \subseteq F_0$ for which $T \subseteq X_2$ and $F_0 \del T \subseteq X_i$.
\end{claim}

\begin{subproof}
Assume for a contradiction that such a triangle $T = \cl({x_1, x_2})$ and a flat $F_0 = \cl(x_0,x_1,x_2)$ exist, $x_0 \notin T$. But then $\{x_0+y, x_1+y, z, x_0+x_2+w_i+z\}$ is an induced $I_4$-restriction. 
\end{subproof}

\textbf{Case 2.1}: $\dim(F')=2$.

We will apply \ref{I_4_ext_case1_claim4_2} to give a case analysis depending on the value of $|F' \cap X_2|$.

If $|F' \cap X_2|=3$, then we have that
	\begin{align*}
		E = W \cup (y + \cl(F' \cup \{y+z\})),
	\end{align*}

so that $M \cong \SAG(n-2,2)$.

If $|F' \cap X_2|=1$ and $|F' \cap X_i|=2$ for $i \in \{0,1\}$, let $x' \in F' \del \{x\}$, and $F'' = \cl(\{x,x'+w_i, y+w_i+z\})$. We then have
	\begin{align*}
		E = \{ w_i, x+y\} \cup ((x+y+w_i) + F''), 
	\end{align*}

where $F'' \cap E = \varnothing$. Hence $M \cong \SAG(n-2,2)$. 

\textbf{Case 2.2}: $\dim(F') > 2$.

We claim that $F' \subseteq X_2$.

We know from \ref{I_4_ext_case1_claim4_2} that $X_2 \neq \varnothing$. Fix $v \in X_2$. 

Suppose towards a contradiction that there exists $v' \in F' \del X_2$, so that $v' \in X_i$ for some $i \in \{0,1\}$

By \ref{I_4_ext_case1_claim4_2} it follows that $v+v' \in X_i$. Since $\dim(F') > 2$, there exists $v'' \notin \cl(\{v,v'\})$. If $v'' \in X_i$, then \ref{I_4_ext_case1_claim4_2} applied to triangles implies that $v+v'+v'' \in X_2$, $v'+v'' \in X_2$ and $v+v'' \in X_i$, but this contradicts \ref{I_4_ext_case1_claim5}. Similarly, if $v'' \in X_2$, then applying \ref{I_4_ext_case1_claim4_2} again implies that $v+v'+v'' \in X_i$, $v' + v'' \in X_i$, and $v+v'' \in X_2$. But then this contradicts \ref{I_4_ext_case1_claim5}. This shows that $F' \subseteq X_2$.

Since $F' \subseteq X_2$, we have $E \del H = (z + [F'])$, so that 
	\begin{align*}
		E &= (E \cap H) \cup (E \del H) \\
		 &= W \cup (y+F') \cup (z + [F']) \\
		 &= W \cup (y + F''),
	\end{align*} 
where $F''=\cl(\{y+z\} \cup F')$ and $F'' \cap E = \varnothing$. Therefore $M \cong \SAG(n-2,2)$. 
\end{proof}

We can now prove the main theorem of this section, restated below.

\begin{theorem}\label{affine_or_cn2_restated}
If $M = (E,G)$ is an $I_4$-free, triangle-free matroid, then either $M$ is affine, or $(E, \cl(E)) \cong D^{k}(\SAG(n-1,2))$ where $n \geq 3$ and $k \geq 0$. 
\end{theorem}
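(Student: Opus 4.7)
The plan is to prove the statement by induction on $\dim(M)$. Since the conclusion concerns $(E, \cl(E))$ rather than $M$ itself, I assume without loss of generality that $M$ is full-rank, by replacing $G$ with $\cl(E)$; I also assume $M$ is non-affine, since otherwise the conclusion already holds. By Corollary \ref{affine_I4_tri_free}, $M$ then contains an induced $C_5$-restriction on some $4$-dimensional flat $F_0$. When $\dim(M) = 4$, the only such flat is $G$ itself, so $M \cong C_5 = \SAG(2,2) = D^0(\SAG(3-1,2))$, giving the base case.

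For the inductive step with $n = \dim(M) \geq 5$, I select a hyperplane $H$ of $G$ containing $F_0$ for which $M \rvert H$ is full-rank in $H$. This is achieved by extending a basis of $F_0$ drawn from $E \cap F_0$ to a basis of $G$ within $E$ (using full-rankness) and taking $H$ to be the span of the first $n-1$ of these basis elements. The restriction $M \rvert H$ is then $(n-1)$-dimensional, full-rank, $I_4$-free, triangle-free, and non-affine, so by the inductive hypothesis $M \rvert H \cong D^{k'}(\SAG(m-1,2))$ for some $k' \geq 0$ and $m \geq 3$ with $m + k' = n - 2$.

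The argument then splits on the value of $k'$. If $k' \geq 1$, then $M \rvert H$ is itself a doubling by some $w \notin E$ with respect to an internal hyperplane $H_0$ of $H$, and $M \rvert H_0 \cong D^{k'-1}(\SAG(m-1,2))$ contains an induced $C_5$ on some $4$-flat $F_0' \subseteq H_0$. A short verification shows that $M \rvert \cl(F_0' \cup \{w\})$ is the doubling of $M \rvert F_0'$ by $w$, so Lemma \ref{doubled_C5} applies and $M$ itself is a doubling by $w$. Choosing any hyperplane $H_1$ of $G$ not containing $w$ gives $M = D(M \rvert H_1)$, and since doublings preserve affinity by Lemma \ref{doubling_prop}, $M \rvert H_1$ must be non-affine; the inductive hypothesis applied to $M \rvert H_1$ then yields $(E \cap H_1, \cl(E \cap H_1)) \cong D^{k''}(\SAG(m''-1,2))$, so that $(E, \cl(E)) \cong D^{k''+1}(\SAG(m''-1,2))$ as required. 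If instead $k' = 0$, dimension counting forces $m = n-2$ and $M \rvert H \cong \SAG(n-3,2)$; Lemma \ref{series_AG_extend} then supplies three possible conclusions, with $E \subseteq H$ ruled out by full-rankness, and the remaining two giving $M \cong D(\SAG(n-3,2)) = D^1(\SAG((n-2)-1,2))$ or $M \cong \SAG(n-2,2) = D^0(\SAG((n-1)-1,2))$ respectively.

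The main technical obstacle I anticipate is the verification, in the $k' \geq 1$ subcase, that the doubling structure on $M \rvert H$ restricts cleanly to a doubling of $M \rvert F_0'$ by $w$ on the flat $\cl(F_0' \cup \{w\})$, so that Lemma \ref{doubled_C5} actually applies. Concretely, one must confirm the identity $E \cap \cl(F_0' \cup \{w\}) = (E \cap F_0') \cup (w + (E \cap F_0'))$, which follows from $\cl(F_0' \cup \{w\}) \subseteq H$, $w \notin H_0$, and the doubling identity $E \cap H = (E \cap H_0) \cup (w + (E \cap H_0))$. Beyond this and the routine dimension bookkeeping needed to present each outcome in the form $D^k(\SAG(n'-1,2))$, the argument reduces to invoking Lemmas \ref{doubled_C5} and \ref{series_AG_extend} at the right moments.
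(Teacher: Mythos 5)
Your proposal is correct and follows essentially the same route as the paper's proof: induction on dimension, choosing a full-rank hyperplane $H$ through the flat of an induced $C_5$, then splitting on whether $M\rvert H$ is a doubling (invoking Lemma \ref{doubled_C5}) or is isomorphic to $\SAG(n-3,2)$ (invoking Lemma \ref{series_AG_extend}). The extra details you supply (the explicit verification that the doubling structure restricts to $\cl(F_0'\cup\{w\})$, and the dimension bookkeeping) are correct and only make explicit what the paper leaves to the reader.
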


\begin{proof}
We proceed by induction on $\dim(M)$. The cases where $\dim(M)=1,2,3$ are routine to check, so we may assume that $\dim(M) \geq 4$. We may assume without loss of generality that $M$ is full-rank.
 
Suppose that $M$ is not affine, so that it contains $C_5$ as an induced restriction. If $\dim(M) = 4$, then $M \cong C_5 = \SAG(2,2)$, so suppose that $\dim(M) > 4$. By extending a basis of such an induced $C_5$-restriction, we can select a hyperplane $H$ of $G$ such that $M \rvert H$ contains $C_5$ as an induced restriction, and $M \rvert H$ is full-rank. By the inductive hypothesis, we have that $M \rvert H \cong D^{k}(\SAG(n-1,2))$ for some $n \geq 3$ and $k \geq 0$. 

If $k \geq 1$, then $H$ contains the doubling of an induced $C_5$-restriction. By Lemma \ref{doubled_C5}, we have that $M$ is the doubling, say by $w$, of $M \rvert H'$ for some hyperplane $H'$. Note that $M \rvert H'$ is not affine, as the doubling of an affine matroid is always affine. Hence we may apply the inductive hypothesis to $M \rvert H'$ to obtain the required result in this case.

If $k = 0$ then Lemma \ref{series_AG_extend} gives the required result. 
\end{proof}

As an immediate corollary, this shows that the $I_4$-free, triangle-free matroids have critical number at most $2$.

\begin{corollary}\label{cn_at_most_2}
If $M$ is $I_4$-free and triangle-free, then $\chi(M) \leq 2$. 
\end{corollary}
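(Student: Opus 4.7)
The plan is to invoke Theorem~\ref{affine_or_cn2_restated} directly and handle its two conclusions separately. In the affine case there is nothing to do, since $\chi(M) = 1$ by definition. In the second case, the entire content amounts to observing that both operations hidden in the conclusion---passing from $M$ to its full-rank restriction $(E,\cl(E))$, and applying a sequence of doublings---are already known to preserve the critical number.

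More precisely, I would first reduce to the full-rank case. If $E$ does not span $G$, set $M_0 = (E,\cl(E))$; then $M$ is obtained from $M_0$ by a sequence of $0$-expansions (simply embedding $\cl(E)$ into larger and larger ambient geometries culminating in $G$). Since $0$-expansions preserve critical number, as remarked in the Preliminaries, it suffices to prove $\chi(M_0) \leq 2$. Now Theorem~\ref{affine_or_cn2_restated} applied to $M$ tells us that either $M$ is affine (so $\chi(M) = 1$) or $M_0 \cong D^{k}(\SAG(n-1,2))$ for some $n \geq 3$ and $k \geq 0$. In the latter case, the matroid $\SAG(n-1,2)$ has critical number exactly $2$ (this is stated among the basic properties of $\SAG(n-1,2)$ in the Preliminaries), and Lemma~\ref{doubling_prop} asserts that critical number is invariant under doubling, so $\chi(M_0) = \chi(\SAG(n-1,2)) = 2$. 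Combining these gives $\chi(M) \leq 2$ in all cases.

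There is no real obstacle here: the statement is an immediate consequence of the structure theorem together with the two preservation facts ($0$-expansions preserving $\chi$, and Lemma~\ref{doubling_prop} for doublings), both of which have already been recorded. The only mild subtlety is the rank-deficient case, and this is dispatched in one line using $0$-expansions.
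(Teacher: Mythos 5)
Your proposal is correct and matches the paper's own proof: both invoke Theorem \ref{affine_or_cn2}, note $\chi = 1$ in the affine case, and in the other case use that $\SAG(n-1,2)$ has critical number $2$ and that doublings (Lemma \ref{doubling_prop}) and re-embedding in a larger ambient geometry preserve critical number. Your explicit treatment of the rank-deficient case is a minor point the paper leaves implicit, but it is handled correctly.
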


\begin{proof}
By Theorem \ref{affine_or_cn2}, it follows that $\chi(M)=1$, or $(E, \cl(E)) \cong D^{k}(\SAG(n-1,2))$ where $n \geq 3$ and $k \geq 0$. Note that $\SAG(n-1,2)$ has critical number 2 for $n \geq 3$. Since doublings preserve critical number, it follows in the latter case that $\chi(M) = 2$. 
\end{proof}

\section{$AI_4$-freeness}

In order to understand the structure of $I_4$-free, triangle-free matroids with critical number exactly $1$, it is helpful to consider the notion of $AI_4$-freeness and our goal of this section is to prove the following. 

\begin{lemma}\label{AI4_free_special_hyperplane}
If $M = (E,G)$ is $AI_4$-free, then there exists a hyperplane $H$ of $G$ such that either $E$ or $G \del E$ is contained in either $H$ or $G \del H$; that is, either
	\begin{itemize}
		\item $E \subseteq H$ ($M$ is rank-deficient),
		\item $G \del E \subseteq H$ ($M^c$ is rank-deficient),
		\item $E \cap H = \varnothing$, or 
		\item $H \subseteq E$. 
	\end{itemize}
\end{lemma}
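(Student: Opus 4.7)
The plan is to reduce to a handful of easy cases via straightforward observations, and then to derive a contradiction in the one substantial case that remains.

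First I would reduce to the setting where both $M$ and $M^c$ are full-rank. If $E$ fails to span $G$, then $E$ is contained in some hyperplane, giving condition (1); by Lemma~\ref{AI4_complement}, $M^c$ is also $AI_4$-free, so analogously, if $E^c$ fails to span $G$ then condition (2) holds. Next I would dispose of the cases when $M$ or $M^c$ is triangle-free, by invoking Lemma~\ref{AI4_tri_free}: an $AI_4$-free, triangle-free matroid is affine, which immediately gives condition (3) for $M$, or condition (4) for $M^c$.

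This leaves the case where both $M$ and $M^c$ are full-rank and both contain a triangle, and my plan is to show this case cannot occur. Fix triangles $T_E = \{a_1, a_2, a_3\} \subseteq E$ and $T_{E^c} = \{b_1, b_2, b_3\} \subseteq E^c$. Since $T_E \subseteq E$ and $T_{E^c} \subseteq E^c$, these two projective lines are disjoint in $G$; a short argument ruling out possible coincidences between the two lines then shows that $\{a_1, a_2, b_1, b_2\}$ is independent and spans a $4$-dimensional flat $F$, whose remaining nine points are the cross-sums $p_{ij} := a_i + b_j$ for $i, j \in \{1, 2, 3\}$. I would then apply $AI_4$-freeness of $M$ to the various bases of $F$ lying entirely in $E$, and dually (via Lemma~\ref{AI4_complement}) of $M^c$ to bases in $E^c$, to derive constraints on the $E$/$E^c$-labelling of the grid $\{p_{ij}\}$; a case analysis over the admissible labellings should then show that none is consistent.

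The main obstacle is this final combinatorial case analysis. The natural $S_3 \times S_3$ symmetry obtained by independently relabelling each of the two triangles should substantially cut down the number of subcases. When the analysis strictly inside $F$ is inconclusive---for instance when $|E \cap F| = 3$, so that no basis of $F$ lies in $E$ and the $AI_4$ constraints on $M$ within $F$ become vacuous---I would use the full-rank hypothesis to pick additional elements $z, w \in E \setminus F$, and combine constraints coming from bases such as $\{a_i, a_j, z, w\}$ (which mix points of $T_E$ with points outside $F$) to locate the required four-element basis in $E$ whose three-sums all lie in $E^c$, contradicting $AI_4$-freeness.
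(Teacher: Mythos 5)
Your opening reduction is correct and clean: conditions (1) and (2) are exactly rank-deficiency of $M$ and $M^c$, and by Lemma \ref{AI4_tri_free} together with Lemma \ref{AI4_complement}, triangle-freeness of $M$ (resp.\ $M^c$) yields condition (3) (resp.\ (4)). Since condition (3) forces $E$ into an affine part of $G$ and condition (4) does the same for $E^c$, the lemma is indeed equivalent to the assertion that no $AI_4$-free matroid has $E$ and $E^c$ both full-rank and both containing a triangle, and your observation that the two triangles must span a rank-$4$ flat $F$ is also right (two disjoint lines cannot lie in a Fano plane).

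The gap is in the final, and only substantial, step. The case analysis over labellings of the nine points $p_{ij}$ does not "show that none is consistent"; on the contrary, most labellings are locally consistent. For instance, if all nine $p_{ij}$ lie in $E^c$, then $E \cap F = T_E$ has rank $2$, so $M\rvert F$ has no basis in $E$ and the constraints from $M$ are vacuous; and for any basis of $F$ inside $E^c \cap F$, its four three-sums themselves form a basis of $F$ (as in the proof of Lemma \ref{AI4_complement}), which cannot fit inside the three-point set $E \cap F$, so the constraints from $M^c$ are automatically satisfied. The same happens for many other labellings. You acknowledge this and propose to import points $z, w \in E \del F$, but this is where the proof actually has to be done and is not: a basis such as $\{a_i, a_j, z, w\}$ constrains sums like $a_k + z$ and $a_i + z + w$, none of whose memberships in $E$ you control, and each new constraint can be discharged by placing yet another unconstrained point in $E$; there is no argument that this chase terminates in a bounded configuration. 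Full-rankness is a global hypothesis, and it is precisely because no bounded local configuration around the triangles yields a contradiction that the paper proceeds instead by induction on dimension via a minimal counterexample, using the stabilizer/sumset Lemma \ref{subset_lemma} and the lengthy coset analysis of Lemmas \ref{AI4_extend_hyperplane_1} and \ref{AI4_extend_hyperplane_2} to handle hyperplanes in which $E$ or $E^c$ is rank-deficient, and only then reducing the remaining case to triangle-freeness. As it stands, your proposal establishes the (correct and useful) reformulation but not the theorem.
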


We first state an important lemma used repeatedly in this section. Note that translates of $U$ include $U$ itself; when we do not want to include $U$ itself, we consider cosets instead.

\begin{lemma}\label{subset_lemma}
	For every matroid $M = (E,G)$, there is a flat $U$ of $G$ for which $U^c = E + E^c$ and $E$ is a union of translates of $U$. Moreover, if $|E| \ge 2$ then $U \subseteq E+E$, and if $|E| \le |G|-2$ then $U \subseteq E^c + E^c$.
\end{lemma}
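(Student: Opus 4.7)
The natural approach is to let $U$ be the nonzero part of the stabiliser of $E$ under translation in $\bF_2^n$. Set
\[
  V = \{v \in \bF_2^n : v + E = E\}.
\]
This is a subspace of $\bF_2^n$ (it contains $0$ and is closed under addition), so $U := V \del \{0\}$ is a flat of $G$ with $[U] = V$.

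The key identity $U^c = E + E^c$ would follow by unpacking definitions: for $v \neq 0$, $v \notin V$ is equivalent to the existence of some $e \in E$ with $v+e \notin E$, which is exactly $v \in E + E^c$; and $0 \notin E + E^c$ since $E \cap E^c = \varnothing$. The union-of-translates assertion would then be a direct consequence of $V$-invariance: for every $e \in E$, the coset $e + V$ in $\bF_2^n$ lies in $E \cup \{0\}$, so after removing $0$ every translate of $U$ meeting $E$ is contained in $E$. (If $e \in U$ then $e + V = V$ and this translate is $U$ itself; if $e \notin U$ then $0 \notin e+V$ and the translate is the coset $e + [U] \subseteq E$.)

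For the moreover statement, the plan is to argue directly: given $u \in U$, choose $e \in E$ with $e \neq u$ (possible since $|E| \ge 2$), observe that $u + e \in E$ by $V$-invariance, and note that $u + e \neq 0$, so $u = (u+e) + e \in E + E$. The $E^c + E^c$ statement would follow by the identical argument applied to $E^c$, using that the same $V$ stabilises $E^c$ (the condition $v + E = E$ is equivalent to $v + E^c = E^c$).

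No step looks difficult; the only point to take some care is tracking whether the element $0 \in \bF_2^n$ plays the role of the coset representative or is simply absent from $G = \bF_2^n \del \{0\}$, which matters in correctly identifying $U$ as a translate of itself (versus as a coset) and in ensuring $u + e \neq 0$ in the final step.
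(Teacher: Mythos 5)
Your construction of $U$ as the punctured translation stabiliser of $E$ is the right starting point, but the proof has a genuine gap at exactly the point you flagged and then did not resolve: the absence of $0$ from $G$. The claimed equivalence ``there is $e \in E$ with $v+e \notin E$'' $\iff$ ``$v \in E + E^c$'' is false, because $v+e \notin E$ also covers the case $v + e = 0$, i.e.\ $v = e \in E$, and $0$ lies in neither $E$ nor $E^c$. This is not just a write-up issue; the construction itself fails. Take $G = \PG(1,2) = \{a,b,a+b\}$ and $E = \{a\}$: then $E + E^c = \{b, a+b\}$, so the lemma forces $U = \{a\}$, but your $V = \{v : v+E=E\}$ is trivial (since $a + E = \{0\} \ne E$), giving $U = \varnothing$ and $U^c = G \ne E+E^c$. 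The same zero-element issue sinks your last step: ``$v+E=E$ is equivalent to $v+E^c=E^c$'' is false when complements are taken inside $G = \bF_2^n \del \{0\}$ --- a nonzero $v$ with $v + E = E$ necessarily lies in $E^c$, so $0 = v+v \in v+E^c$ and hence $v + E^c \ne E^c$.

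The paper's proof repairs precisely this with a parity reduction that your argument is missing. The statement is symmetric under replacing $E$ by $E^c$, and $|G| = 2^n - 1$ is odd, so one may assume $|E|$ is even; then $|E^c|$ is odd, and since $E^c$ is partitioned into translates of its stabiliser $S_{E^c}$ (a subspace, hence of $2$-power order), $S_{E^c}$ must be trivial. With that in hand, the problematic case $a \in E$, $a \in U^c$ is handled by running your argument on $E^c$ instead of $E$: since $a + E^c \ne E^c$, there is $f \in E^c$ with $a+f \notin E^c$, and now $a + f \ne 0$ is automatic because $a \in E$ and $f \in E^c$ are distinct, so $a + f \in E$ and $a \in E + E^c$. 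Once $U$ is defined from the stabiliser of whichever of $E$, $E^c$ has even size, the remaining parts of your outline (the containment $E+E^c \subseteq U^c$, the union-of-translates claim, and $U \subseteq E+E$) go through; for $U \subseteq E^c + E^c$ you should argue via the fact that $E^c$ is also a union of translates of $U$, not via the false stabiliser equivalence.
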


\begin{proof}
	For each set $A \subseteq [G]$, let $S_A = \{s \in [G] : s + A = A\}$; this is a subspace of $[G]$, so has cardinality a power of $2$, and since $A$ is a disjoint union of translates of $S_A$ the ratio $|A|/|S_A|$ is an integer. Thus, if $|A|$ is odd, the subspace $S_A$ is trivial. 
	
	Since the statement of the lemma is identical when $E$ is replaced by $E^c$, and $|E|$ is odd, we may assume that $|E|$ is even; thus $|E^c|$ is odd, so $S_{E^c}$ is trivial. We show that the flat $U = S_E \del \{0\}$ satisfies the lemma.
	
	First, let $e + f \in E + E^c$, where $e \in E$ and $f \in E^c$. If $e + f \in [U]$, then $f = (e + f) + e \in (e+f) + E = E$, a contradiction. Therefore $E + E^c \subseteq G \del [U] = U^c$. 
	
	Now, let $a \in U^c$. If $a \in E$, then since $S_{E^c}$ is trivial, we have $a + E^c \ne E^c$, so there is some $f \in E^c$ such that $a + f \in G \del E_c$; since $a \ne f$ we have $a + f \ne 0$ and so $a + f \in E$. Therefore $a \in E + f \subseteq E + E^c$. If $a \in E^c$, then since $a \notin U$, we have $a + E \ne E$ and so there is some $e' \in E$ for which $a + e' \notin E$, Since $a \ne e'$ we have $a + e' \ne 0$ and so $a + e' \in E^c$; this gives $a \in E + E^c$. 
	
	The last two arguments give $U^c = E + E^c$ as required. Since $U = S_E \del \{0\}$, $E$ is a union of some translates of $U$, which also implies that if $|E| \ge 2$ then $U \subseteq E+E$, and if $|E| \le |G|-2$ then $U \subseteq E^c + E^c$. 
\end{proof}

The proof of Lemma \ref{AI4_free_special_hyperplane} will follow from the following lemmas.

\begin{lemma}\label{AI4_extend_hyperplane_1}
Let $M = (E,G)$ be an $AI_4$-free matroid of dimension at least $5$, and $H$ be a hyperplane of $G$ such that $|E \del H| \le 1$. Then $G$ has a hyperplane $H'$ such that either
		$E \subseteq H'$,
		$E \cap H' = \varnothing$, or 
		$H' \subseteq E$.

\end{lemma}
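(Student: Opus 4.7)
My plan is a case analysis on $E \setminus H$. If $E \setminus H = \varnothing$ then $H' := H$ already works, so assume $E \setminus H = \{x\}$ and set $E' = E \cap H$. Applied to any basis $\{x, y_1, y_2, y_3\}$ of a four-dimensional flat with $y_1, y_2, y_3 \in E'$ linearly independent, $AI_4$-freeness yields the key property (P): the three sums $x + y_j + y_k$ lie in $G \setminus H$ and each differs from $x$, so by $E \setminus H = \{x\}$ none lies in $E$, and the only remaining triple sum forces
\[
	y_1 + y_2 + y_3 \in E' \qquad \text{for all linearly independent } y_1, y_2, y_3 \in E'.
\]
If $E'$ does not span $H$ then $\cl(E') \subseteq F$ for some hyperplane $F$ of $H$, so $E \subseteq \cl(F \cup \{x\})$ is contained in a hyperplane $H'$ of $G$. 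From now on assume $E'$ spans $H$ and pick linearly independent $y_1, y_2, y_3, y_4 \in E'$; let $F_0 = \cl(y_1, y_2, y_3, y_4)$.

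The crux is a dichotomy within $F_0$: either $E' \cap F_0 = F_0$ or $E' \cap F_0 = A_0$, where $A_0 = y_1 + [\cl(y_1+y_2, y_1+y_3, y_1+y_4)]$ is the eight-element ``odd-sum coset'' consisting of the $y_i$ and the $y_i + y_j + y_k$. The inclusion $A_0 \subseteq E' \cap F_0$ is immediate from (P). For the converse, if some even-sum element (say $y_1+y_2$) lies in $E'$, I would cascade (P) through the linearly independent triples $\{y_1, y_3, y_1+y_2\}$, $\{y_2, y_3, y_1+y_2\}$, $\{y_1, y_4, y_1+y_2\}$, $\{y_2, y_4, y_1+y_2\}$, $\{y_3, y_4, y_1+y_2\}$, and $\{y_1, y_4, y_1+y_3\}$, which successively produce $y_2+y_3$, $y_1+y_3$, $y_2+y_4$, $y_1+y_4$, $y_1+y_2+y_3+y_4$, and $y_3+y_4$, so that all $15$ points of $F_0$ lie in $E'$.

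I would then promote this dichotomy to all of $H$ by induction on $k \geq 4$, maintaining that $E' \cap \cl(y_1, \ldots, y_k)$ is either the whole $k$-flat (``full'') or the odd-sum coset with respect to $y_1, \ldots, y_k$ (``odd''). Given $y_{k+1} \in E' \setminus \cl(y_1, \ldots, y_k)$, the dichotomy applied to each four-flat $\cl(y_i, y_j, y_\ell, y_{k+1})$ is forced to match the current case (in the full case $y_i + y_j \in E'$ rules out odd, while in the odd case $y_i + y_j \notin E'$ rules out full). Further applications of (P) to triples such as $\{y_1, y_2, y_3 + y_4 + y_{k+1}\}$, whose second summand sits in a four-flat through $y_{k+1}$ already handled, then pick up the odd-sum (or, in the full case, even-sum) elements of $\cl(y_1, \ldots, y_{k+1})$ not visible in any single four-flat through $y_{k+1}$. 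Since $E'$ spans $H$, the induction terminates with $E' = H$ or $E' = H \setminus V$, where $V = \cl(y_1+y_2, \ldots, y_1+y_{\dim H})$ is a hyperplane of $H$.

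To close: if $E' = H$ take $H' := H$, so $H' \subseteq E$. If $E' = H \setminus V$, set $H' := \cl(V \cup \{w\})$ for any $w \in (G \setminus H) \setminus (x + [V])$; such $w$ exists since $|G \setminus H| = 2^{n-1} > 2^{n-2} = |x + [V]|$. A dimension count gives $H' \cap H = V$, and $w \notin x + [V]$ forces $x \notin H'$, so $E \cap H' = ((H \setminus V) \cap V) \cup (\{x\} \cap H') = \varnothing$. The main obstacle is the inductive propagation of the dichotomy in the previous paragraph: while each single invocation of (P) is a routine algebraic check, patiently bootstrapping (P) through chains of previously produced sums — to recover both the ``short'' even-sum pairs that instantly collapse a four-flat into the full case, and the ``long'' odd-sum tuples with more than three nonzero coordinates that must be assembled from smaller odd sums — is the delicate part of the argument.
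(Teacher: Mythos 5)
Your proof is correct, and its first half coincides with the paper's: both apply $AI_4$-freeness to $\{x,y_1,y_2,y_3\}$ and use $E\setminus H=\{x\}$ to kill the three sums through $x$, obtaining that every sum of three linearly independent elements of $E\cap H$ again lies in $E\cap H$, and both dispose of the case where $E\cap H$ fails to span $H$. You diverge in how a spanning $E\cap H$ with this closure property is classified. The paper notes the property makes $M\rvert H$ $I_3$-free and then splits on whether $M\rvert H$ contains a triangle: if not, it invokes Lemma \ref{I_3_triangle_free} (the Bose--Burton-type result quoted from [\ref{bkknp}]) to get $E\cap H=H\setminus K$; if so, a short propagation gives $H\subseteq E$. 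You instead prove the same dichotomy ($E\cap H=H$, or $E\cap H$ is the odd-weight coset $H\setminus V$) from scratch by tracking weights relative to a basis of $H$ chosen inside $E\cap H$, first in a four-dimensional flat and then by induction on the span. This buys a self-contained argument that avoids the cited lemma, at the cost of the weight-bookkeeping you rightly flag as delicate; that bookkeeping does go through (odd-weight elements by induction on weight via your property (P); once any even-weight element is present, repeated application of (P) to $\{y_i,y_j,z\}$ reduces it to a weight-two element, which forces every four-flat through two basis elements to be full and then all even weights follow), so what remains is written detail rather than a missing idea. One small imprecision: ``some even-sum element (say $y_1+y_2$)'' silently conflates weight two with weight four, but $(P)$ applied to $\{y_1,y_2,y_1+y_2+y_3+y_4\}$ immediately produces $y_3+y_4$, so the reduction is one line. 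Your endgame (the choice of $H'$ in each case) matches the paper's.
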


\begin{proof}
We may assume that $M$ is full-rank and that $|E \del H| = 1$, as otherwise the result is trivial. Let $\{v\} = E \del H$. Note that $M \rvert H$ must be full-rank, as otherwise, $r(E) \le r(E \cap H) + 1 \le \dim(G)-1$, so $M$ is not full-rank. Also, we have $\sum_{x \in J} x \in E$ for each three-element linearly independent subset $J$ of $E \cap H$, since otherwise $J \cup \{v\}$ would violate $AI_4$-freeness. In particular, the matroid $M \rvert H$ is $I_3$-free.

Hence, if $M \rvert H$ is triangle-free, it follows from Lemma \ref{I_3_triangle_free} that $M \rvert H$ is an affine geometry, so that $E \cap H = H \del K$ where $K$ is a hyperplane of $H$. Then for any $x \in H \del K$, we have that $E \cap H' = \varnothing$ where $H' = \cl(K \cup \{v+x\})$ is a hyperplane of $G$, giving us the required result. Therefore, we may assume that $M \rvert H$ contains a triangle $T = \{x_1, x_2, x_1+x_2\}$, but then for any $x_3 \in (E \cap H) \del T$ it follows from the above observation that $\{x_1+x_2+x_3, x_1+x_3, x_2+x_3\} \subseteq E$. Since $M \rvert H$ is full-rank, it follows that $H \subseteq E$. 
\end{proof}

\begin{lemma}\label{AI4_extend_hyperplane_2}
Let $M = (E,G)$ be an $AI_4$-free matroid of dimension at least $5$, let $H$ be a hyperplane of $G$, and let $F$ be a hyperplane of $H$ such that $E \cap H \subseteq F$. Then there exists a hyperplane $H'$ of $G$ such that either $E$ or $G \del E$ is contained in either $H'$ or $G \del H'$. 
\end{lemma}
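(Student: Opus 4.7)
The plan is to decompose $E$ according to the cosets of $F$ that lie outside $H$ and to use $AI_4$-freeness on carefully chosen independent $4$-sets to force a rigid structure. Write $H_1, H_2$ for the two hyperplanes of $G$ other than $H$ containing $F$ (these exist since $F$ has codimension $2$ in $G$), and set $A := E \cap F$, $B_1 := E \cap (H_1 \del F)$, $B_2 := E \cap (H_2 \del F)$, so that $E = A \sqcup B_1 \sqcup B_2$ by the hypothesis $E \cap H \subseteq F$.

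First I would dispatch the boundary cases in which some piece is empty or fills its coset: $A = \varnothing$ gives $E \cap H = \varnothing$; $B_i = \varnothing$ gives $E \subseteq H_{3-i}$; $A = F$ combined with $B_i = H_i \del F$ gives $H_i \subseteq E$; and $B_1 = H_1 \del F$ together with $B_2 = H_2 \del F$ gives $G \del E \subseteq H$. Each of these is one of the four desired conclusions. If instead some $B_i$ has size at most $1$, then $|E \del H_{3-i}| \leq 1$, so Lemma \ref{AI4_extend_hyperplane_1} applied to the hyperplane $H_{3-i}$ directly furnishes a suitable $H'$; the case $|A| \leq 1$ can be handled analogously, if necessary after passing to $M^c$ via Lemma \ref{AI4_complement}.

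For the core case, $|A|, |B_1|, |B_2| \geq 2$ with no piece full, I would exploit $AI_4$-freeness through three families of independent $4$-sets. Taking distinct $a, a' \in A$ and $b_i \in B_i$, a routine coset calculation shows that the $4$-set $\{a,a',b_1,b_2\}$ is always independent, and that both three-sums $a + b_1 + b_2$ and $a' + b_1 + b_2$ lie in $H \del F$, hence outside $E$. The $AI_4$ condition therefore forces $a + a' + b_1 \in B_1$ or $a + a' + b_2 \in B_2$. Varying the choices yields
\[ (A + A) \del \{0\} \subseteq T_1 \cup T_2, \]
where $T_i := \{s \in [F] : s + B_i \subseteq B_i\}$ is the stabiliser subspace of $B_i$ in $[F]$. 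Symmetric applications to $\{a, b_1, b_1', b_2\}$ and $\{a, b_1, b_2, b_2'\}$ yield the analogous containments $(B_i + B_i) \del \{0\} \subseteq T_{3-i} \cup U_A$ for $i \in \{1,2\}$, where $U_A := \{s \in [F] : s + A \subseteq A\}$ is the stabiliser of $A$.

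Finally, I would invoke Lemma \ref{subset_lemma} to obtain a flat $U$ with $E$ a union of translates of $U$; the non-emptiness of each of $A, B_1, B_2$ readily yields $E + (G \del E) \supseteq G \del F$, forcing $[U] \subseteq [F]$, whence $[U] = U_A \cap T_1 \cap T_2$. Combining the three stabiliser constraints with the $[U]$-coset decompositions of $A, B_1, B_2$ allows one to produce an explicit nonzero linear form $f \colon [G] \to \bF_2$ that is constant on $E$ or on $G \del E$; the kernel of $f$ (minus $0$) is then the desired hyperplane $H'$. The main obstacle will be controlling the sub-cases arising when $U_A, T_1, T_2$ are genuinely distinct proper subspaces of $[F]$: in those sub-cases one must verify that the prescribed values of $f$ on coset representatives of $A, B_1, B_2$ modulo $[U]$ are mutually consistent with the stabiliser constraints, and use the hypothesis $\dim(G) \geq 5$ to ensure the resulting form is nonzero on $[G]$.
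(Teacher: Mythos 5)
Your setup coincides with the paper's: the same decomposition of $E$ into $E\cap F$ and its traces on the two cosets of $F$ outside $H$, the same reduction of the small pieces via Lemma~\ref{AI4_extend_hyperplane_1}, the same use of cross-coset independent $4$-sets to extract stabiliser constraints, and the same appeal to Lemma~\ref{subset_lemma}. The problem is that everything up to this point is the easy half of the argument, and the half you defer --- ``combining the three stabiliser constraints \ldots allows one to produce an explicit nonzero linear form'' --- is where the lemma actually lives. The three containments $(A+A)\del\{0\}\subseteq T_1\cup T_2$ and $(B_i+B_i)\del\{0\}\subseteq T_{3-i}\cup U_A$ do not by themselves yield a hyperplane. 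In the paper one first argues that a subspace contained in a union of two subspaces lies in one of them, so two of the three stabilisers coincide (call the common value $U$) and the third is contained in them; this forces two of $A,B_1,B_2$ to be exact translates of $U$ and the third to lie in a single translate. But the resulting configurations are not all realisable, and ruling out the bad ones (e.g.\ all three pieces sitting inside $[U]$ itself, or $E\cap F$ in a coset of $U$ while the other two sit in $U$) requires \emph{fresh} applications of $AI_4$-freeness to new $4$-sets such as $\{y_1,y_2,v_1+y_1+y_2,v_2+y_1+y_2\}$ --- information not encoded in the three stabiliser constraints. Only after that elimination can one exhibit $H'$ (or conclude rank-deficiency) in each surviving case. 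So the ``main obstacle'' you flag is not a routine verification; it is the bulk of the proof and is missing.

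Two smaller but genuine problems. First, the containment $(B_1+B_1)\del\{0\}\subseteq T_2\cup U_A$ is false as stated when $s=b_1+b_1'$ happens to lie in $A$: the $4$-set $\{a,b_1,b_1',b_2\}$ is dependent for $a=s$, so you only obtain $s+(A\del\{s\})\subseteq A$, which in fact implies $s\notin U_A$ (since $0\in s+A$). The paper avoids this by translating each piece by a representative chosen \emph{outside} $E$ whenever possible, and by phrasing the constraint through $X+X^c=U^c$ from Lemma~\ref{subset_lemma} rather than through a one-sided stabiliser condition; your argument needs the same normalisation. Second, the case $|E\cap F|\le 1$ cannot be ``handled analogously after passing to $M^c$'': complementation turns the hypothesis $E\cap H\subseteq F$ into $H\del F\subseteq E$, which is not an instance of the lemma's hypothesis for $M^c$. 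The paper treats this case separately by choosing a hyperplane $F''$ of $F$ disjoint from $E$, rebuilding the codimension-$2$ flat as $F'=\cl(F''\cup\{z\})$ with $z\in H\del F$, and then re-entering the main case (for $M$ or for $M^c$) with respect to the new coset decomposition. Without that manoeuvre your boundary analysis is incomplete.
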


\begin{proof}
Suppose that no such $H'$ exists. In particular, $M$ is full-rank (since otherwise any hyperplane $H'$ containing $\cl(E)$ satisfies $E \subset H'$), and $|E \cap F| \geq 1$ (since otherwise $E \subseteq G \del H$). We will first prove the lemma in the case where $|E \cap F| > 1$.

\textbf{Case 1: $|E \cap F| > 1$}

Let $A_0 = H \del F$, and let $A_1, A_2$ denote the two remaining cosets of $F$ in $G$. By assumption, we have $A_0 \cap E = \varnothing$.

\begin{claim}\label{excluded_patterns_AI4_c1}
	Let $v_1 \in E \cap A_1$ and $v_2 \in E \cap A_2$. If $u,u'$ are distinct elements of $F$ with $u' \in E$, then $|\{u+v_1,u+v_2,u + u'\} \cap E| \ne 1$.
\end{claim}
\begin{subproof}
	Note that $E \cap (F + v_1 + v_2) = E \cap A_0 =  \varnothing$. If $u + u' \in E$ while $u+v_1,u+v_2 \notin E$, then $\{v_1,v_2,u',u+u'\}$ would violate $AI_4$-freeness. If $u + v_i \in E$ for some $i \in \{1,2\}$ while $u+ u',u+ v_{3-i} \notin E$, then $\{u',v_1,v_2,v_i + u\}$ would violate $AI_4$-freeness. 
\end{subproof}

We may assume that $|A_1 \cap E| > 1$; if not, we can take a hyperplane $H' = \cl(F \cup A_2)$, so that $|E \del H'| \leq 1$, and Lemma \ref{AI4_extend_hyperplane_1} gives a contradiction. Similarly, we may assume $|A_2 \cap E| > 1$. If $A_i \cap E \neq A_i$, select $v_i \in A_i \del E$, and otherwise, choose $v_i \in A_i \cap E = A_i$, for $i=1,2$. Note that at most one of $v_1 \in E$ and $v_2 \in E$ holds; otherwise, $A_1, A_2 \subseteq E$, so that choosing $H' = F \cup A_0$ gives $G \del E \subseteq H'$.

Let $X_i = (v_i + (A_i \cap E)) \del \{0\}$ for $i=1,2$. Note that $X_i \subseteq F$ and $|X_i| > 1$ for each $i=1,2$  by our choice of $v_i$. Write $X_0 =  F \cap E$. Note that $|X_0| > 1$.

Now, \ref{excluded_patterns_AI4_c1} implies 
	\begin{align*}
		& (X_0 + X_0)  \cap(X_1+X_1^c) \cap (X_2 + X_2^c) = \varnothing \\
		& (X_0 + X_0^c) \cap (X_1 + X_1)  \cap (X_2 + X_2^c) = \varnothing \\
		&(X_0 + X_0^c) \cap (X_1 + X_1^c) \cap (X_2 + X_2) = \varnothing 
	\end{align*}

We may now apply Lemma \ref{subset_lemma} (with $F$ being the ambient space), to obtain flats $U_i$ for which $X_i + X_i^c = U_i^c$, and $X_i$ is a union of translates of $U_i$ for each $i=0,1,2$ provided $U_i$ is not empty. Moreover, since $|X_i| > 1$, we have $U_{i} \subseteq X_i + X_i$ for $i=0,1,2$.

Hence, for any distinct $i,j,k \in \{1,2,3\}$, we have $$ (X_i + X_i)  \subseteq U_j \cup U_k $$.

Recall that $U_i \subseteq X_i + X_i$ for $i=0,1,2$. Therefore, for any distinct $i,j,k \in  \{1,2,3\}$ we obtain $$U_i \subseteq U_j \cup U_k$$.

Note that each of $U_1, U_2,U_3$ is a flat. This implies that at least two of $U_0, U_1, U_2$ are identical and the third is contained in the other two. Let us write $U_i \subseteq U_j = U_k$ for some $ i,j,k \in \{0,1,2\}$. Let $U = U_j = U_k$. 

Since $U \subseteq X_j + X_j$ and $X_j + X_j \subseteq U_i \cup U = U$, it follows that $U = X_j + X_j$, and similarly $U = X_k + X_k$. We also have $X_i + X_i \subseteq U$. In particular, since $|X_j| > 1$, $U$ is non-empty. Since $U = X_j + X_j$ and $X_j$ is a union of translates of $U$, it follows that $X_j$, and similarly $X_k$, equal a translate of $U$ in $F$. In a similar vein, since $X_i + X_i \subseteq U$, $X_i$ is contained in a translate of $U$ in $F$. To summarise, we have the following.

\begin{claim}\label{U_i_relation}
There exists a non-empty flat $U \subseteq F$ and $j, k \in \{0,1,2\}$, $j \neq k$, such that 
	\begin{itemize}
		\item $X_j$ and $X_k$ equal a translate $U$.
		\item $X_i$ is contained in a translate of $U$.
	\end{itemize}
where $i \in \{0,1,2\} \del \{j,k\}$.
\end{claim}

We say that a set $X$ is full if it equals a translate of $U$. Hence, at least two of $X_0, X_1, X_2$ are full.

We now consider two cases depending on whether $v_1 \in E$ and $v_2 \in E$ (recall that at most one of the two can hold at the same time). In the case $v_1 \in E$ or $v_2 \in E$, we may assume, by symmetry, that $v_2 \in E$ and $v_1 \notin E$. 

\textbf{Case 1.1: $v_1, v_2 \notin E$}

\begin{claim}\label{U_i_relation_2}
At most two of $X_0, X_1, X_2$ are contained in $U$. Moreover, if precisely two of $X_0, X_1, X_2$ are contained in $U$, then $X_0 \subseteq U$.
\end{claim}

\begin{subproof}
Suppose first for a contradiction that $X_i \subseteq U$ for each $i=0,1,2$. If $X_1, X_2$ are all full, then since $|X_0| > 1$, we may select two elements $y_1, y_2 \in X_0$, then $\{y_1,y_2, v_2+y_1+y_2, v_2+y_1+y_2\}$ would violate $AI_4$-freeness by \ref{excluded_patterns_AI4_c1}. If $X_1$ is not full, then we may select $x_1 \in X_1$, and $y_1 \notin X_1$, so that $\{x_1, y_1, x_1+v_1, x_1+y_1+v_2\}$ would violate $AI_4$-freeness by \ref{excluded_patterns_AI4_c1}. Similarly, if $X_2$ is not full, a symmetrical argument shows that it would violate $AI_4$-freeness, giving a contradiction.

Suppose next that precisely two of $X_0, X_1, X_2$ are contained in $U$. For a contradiction, suppose that $X_0$ is not contained in $U$, so that $X_1, X_2 \subseteq U$, and $X_0$ is contained in a coset of $U$. If $X_1, X_2$ are full, then select two elements $y_1, y_2 \in X_0$, then $\{y_1, y_2, v_1+y_1+y_2, v_2+y_1+y_2\}$ would violate $AI_4$-freeness by \ref{excluded_patterns_AI4_c1}. If $X_1$ is not full, then select $x_1 \in X_1$, $y_1 \notin X_1$ and $z_1 \in X_0$. Then $\{x_1+v_1, x_1+y_1+v_2, z_1, x_1+y_1+z_1\}$ would violate $AI_4$-freeness by \ref{excluded_patterns_AI4_c1}. By symmetry, the case where $X_2$ is not full follows, giving a contradiction in all cases.
\end{subproof}

We are now ready to complete the analysis of Case 1.1. In each of the possible outcomes resulting from \ref{U_i_relation_2}, we will show that we can select a hyperplane $H'$ of $G$ that satisfies the theorem or find an induced $I_4$-restriction, giving a contradiction.

Suppose first that none of $X_0, X_1, X_2$ is contained in $U$. Let $X_i \subseteq B_i$ where $B_i$ is a coset of $U$ for $i=0,1,2$. If $B_0 = B_1 = B_2$, then we may assume that there is no other coset of $U$, as otherwise $M$ is rank-deficient. But then, we may select $H' = \cl({U \cup \{v_1, v_2\}})$, and we have $E \subseteq G \del H'$. Therefore we may assume that $B_0, B_1, B_2$ are not identical, so we may assume without loss of generality that $B_0 \neq B_1$. But then $B_1 \cap \cl(E) = \varnothing$ (to see this, note that a general element $z$ of $\cl(E)$ has the form $v_1+v_2+x_0+x_1+x_2+y$ where $x_0 \in B_0 \cup \{0\}$, $x_1 \in B_1 \cup \{v_1\}$, $x_2 \in B_2 \cup \{v_2\}$, $y \in U \cup \{0\}$, and hence $z \notin B_1$ as otherwise it would force $x_1=v_1$, $x_2=v_2$, giving $z \in U \cup B_0$). Therefore $M$ is rank-deficient.

Suppose next that precisely one of $X_0, X_1, X_2$ is contained in $U$. First, suppose that $X_0 \subseteq U$. Then, it follows in a similar way that $B_1 \cap \cl(E) = \varnothing$. Therefore, $M$ is rank-deficient. Hence we may assume without loss of generality that $X_1 \subseteq U$, and $X_0 \subseteq B_0$ and $X_2 \subseteq B_2$ for (possibly identical) cosets $B_0$, $B_2$ of $U$. Note that we may assume that the only cosets are $B_0, B_2, B_0+B_2$, as otherwise $M$ is rank-deficient. Select $x \in B_0$, and let $H' = \cl(U \cup (B_0+B_2) \cup \{v_1+x, v_2\})$. Then we have that $E \subseteq G \del H'$.

Finally, we consider the case where precisely two of $X_0, X_1, X_2$ are contained in $U$. Suppose without loss of generality that $X_0, X_1 \subseteq U$ and $X_2 \subseteq B_2$ where $B_2$ is a coset of $U$. But then it follows that $B_2 \cap \cl(E) = \varnothing$. So $M$ is rank-deficient. 

\textbf{Case 1.2: $v_1 \notin E$, $v_2 \in E$}. 

The fact that $v_2 \in E$ means that $A_2 \subseteq E$ by our choice of $v_2$. Hence $X_2 = F$, and therefore $U_2 = F$. Hence, either $U_0 = F$ or $U_1 = F$. If $U_0 = F$, then choosing the hyperplane $H' = \cl(F \cup A_2)$, we have that $G \del E \subseteq G \del H'$. Hence we may assume that $U_1 = F$. We may also assume that $F \cap E \neq E$, as otherwise $H' = \cl(F \cup A_2)$ satisfies $G \del E \subseteq G \del H'$ again. Let $w_1 \in F \del E$, $w_2 \in F \cap E$. Then $\{v_1+w_1+w_2, w_2, v_2, v_2+w_1+w_2\}$ would violate $AI_4$-freeness by \ref{excluded_patterns_AI4_c1}.

\textbf{Case 2: $|E \cap F| = 1$}

Choose $F''$ to be a hyperplane of $F$ such that $F'' \cap E = \varnothing$. Let $F' = \cl(F'' \cup \{z\})$ for any $z \in A_0$ so that that $F' \cap E = \varnothing$, and consider the three cosets of $F'$ in $G$, denoted $A_0'$, $A_1'$, $A_2'$ where we take $A_0'$ so that $|A_0' \cap E|=1$. Let $v \in A_0' \cap E$. 

If $|A_1' \cap E|>1$ and $A_1' \cap E$ is not full-rank, then we may select a hyperplane $H'$ of $F' \cup A_1'$ such that $E \cap (F' \cup A_1') \subseteq H'$, and $|E \cap H'| > 1$. Case $1$ then applies, and the same holds with $A_2'$. So we may assume without loss of generality that $|A_i' \cap E|=1$ or $A_i' \cap E$ is full rank for each $i=1,2$. 

If $|A_i' \cap E| = 1$ for $i=1,2$, then because $\dim(M) \geq 5$, $M$ is rank-deficient, so we may assume without loss of generality that $A_1' \cap E$ is full-rank. Given three linearly independent vectors $v_1, v_2, v_3 \in A_1' \cap E$, we must have that $v_1+v_2+v_3 \in E$, as otherwise $\{v, v_1, v_2, v_3\}$ would violate $AI_4$-freeness. Therefore, $A_1' \subseteq E$, and let $H' = A_1' \cup F'$. It is then easy to check that the conditions are met to apply Case 1 with the matroid $M^c$ and the hyperplane $H'$ to give the required result.

\end{proof}

We can now prove Lemma \ref{AI4_free_special_hyperplane}. 

\begin{proof}[Proof of Lemma \ref{AI4_free_special_hyperplane}]
Let $M = (E,G)$ be a counterexample of smallest dimension. If $\dim(M)=1,2,3$, then we obtain a contradiction from a routine check, hence we may assume $\dim(M) \geq 4$. 

\begin{claim}
$\dim(M) \geq 5$.
\end{claim}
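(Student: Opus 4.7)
The plan is to rule out $\dim(M) = 4$. By complementation (Lemma~\ref{AI4_complement}) and the symmetry of the four stated outcomes under $E \leftrightarrow G \del E$, we may assume $|E| \le |G|/2 = 7$. Since $E$ is not contained in any hyperplane (else outcome 1 is immediate via $\cl(E)$), $M$ is full-rank, so $E$ contains a basis $\{x_1, x_2, x_3, x_4\}$ of $G$. Applying $AI_4$-freeness to this basis and relabelling if necessary, we may assume $x_5 := x_1+x_2+x_3 \in E$. Let $\phi$ be the linear functional on $[G]$ with $\phi(x_i) = 1$ for $1 \le i \le 4$; then $\phi(x_j) = 1$ for all $1 \le j \le 5$.

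The strategy is to show $\phi(v) = 1$ for every $v \in E$: this places $E$ inside the affine hyperplane $A_1 := \phi^{-1}(1)$, so the hyperplane $H := \ker\phi \del \{0\}$ of $G$ satisfies $E \cap H = \varnothing$, which is outcome 3 and gives the desired contradiction. Equivalently, no element of \emph{even weight} with respect to the basis $x_1,\ldots,x_4$ lies in $E$; the seven even-weight elements of $G$ are the six pair-sums $x_i + x_j$ and the quadruple-sum $x_1+x_2+x_3+x_4$. The small cases $|E| \le 5$ dispatch immediately: $|E| \le 3$ trivially fits in a hyperplane; $|E| = 4$ would force $E$ to be a basis of $G$ with no triple sum in $E$, contradicting $AI_4$-freeness; and $|E| = 5$ forces $E = \{x_1,\ldots,x_5\} \subseteq A_1$ directly.

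For $|E| \in \{6, 7\}$, suppose some even-weight element $v \in E$. The plan is to exhibit several four-element subsets of $\{v, x_1, \ldots, x_5\}$ that form bases of $G$; for each such basis $B$, the $AI_4$-condition requires $E$ to contain at least one of the four triple sums of $B$, and by direct inspection these triple sums all lie outside $\{x_1, \ldots, x_5, v\}$. For a suitable choice of at most three such bases, the corresponding ``required-sum'' subsets have empty common intersection, so no single extra element of $E$ can simultaneously witness all the constraints. Since $|E| \le 7$ permits at most one element of $E$ beyond $\{x_1, \ldots, x_5, v\}$, this yields a contradiction with $AI_4$-freeness, forcing $v \notin E$. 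By the $S_3$-symmetry among $x_1, x_2, x_3$ induced by $x_5 = x_1+x_2+x_3$ (with $x_4$ distinguished), the seven candidates for $v$ reduce to the three essentially distinct sub-cases $v \in \{x_1+x_2,\, x_1+x_4,\, x_1+x_2+x_3+x_4\}$. The main obstacle is this finite but non-trivial constraint-intersection calculation in $\PG(3, 2)$ for each sub-case.
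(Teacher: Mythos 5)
Your argument is correct, and the computation you defer does go through. The reduction steps coincide with the paper's: complement to force $|E|\le 7$, assume $M$ is full-rank, and extract a basis $\{x_1,\dots,x_4\}\subseteq E$ together with a triple sum $x_5=x_1+x_2+x_3\in E$ (the paper phrases this as finding a $C_4$-restriction). The finite check is then organized genuinely differently. The paper works inside the $3$-flat $H=\cl(\{x_1,x_2,x_3\})$ and splits on whether the $C_4$ is induced and on $|E\cap H|\in\{5,6\}$, deriving an explicit $AI_4$-violation in each branch; you instead aim directly at outcome 3, exhibiting the witnessing hyperplane as the set of even-weight points relative to $x_1,\dots,x_4$, and exploit the counting fact that $|E|\le 7$ leaves room for at most one point of $E$ outside $\{x_1,\dots,x_5,v\}$. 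Your packaging is arguably cleaner because it names the hyperplane produced rather than only reaching a contradiction. To close the step you flag as the main obstacle: for $v=x_1+x_2$ the bases $\{v,x_1,x_3,x_4\}$, $\{v,x_2,x_3,x_4\}$, $\{v,x_1,x_4,x_5\}$ work, their required-sum sets being $\{x_2+x_3,x_2+x_4,x_1+x_2+x_3+x_4,x_1+x_3+x_4\}$, $\{x_1+x_3,x_1+x_4,x_1+x_2+x_3+x_4,x_2+x_3+x_4\}$ and $\{x_2+x_4,x_1+x_3,x_3+x_4,x_2+x_3+x_4\}$ (the first two meet only in $x_1+x_2+x_3+x_4$, which the third avoids); for $v=x_1+x_4$ take $\{v,x_2,x_3,x_4\}$, $\{v,x_2,x_4,x_5\}$, $\{v,x_3,x_4,x_5\}$ (the first two required-sum sets meet only in $x_1+x_2$, which the third avoids); for $v=x_1+x_2+x_3+x_4$ take $\{v,x_1,x_2,x_4\}$, $\{v,x_1,x_3,x_4\}$, $\{v,x_2,x_3,x_4\}$ (the first two meet only in $x_2+x_3$, which the third avoids). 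In each case one checks directly that none of the required sums lies in $\{x_1,\dots,x_5,v\}$, so the single permissible extra element of $E$ would have to lie in the empty triple intersection, as you describe.
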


\begin{proof}
This is a tedious check. If $\dim(M)=4$, then replacing $M$ with $M^c$ if necessary, we may assume that $|E| \leq 7$. We may also assume that $M$ is full-rank. Note that $M$ needs to contain a $C_4$-restriction, on a hyperplane $H$, since $M$ is $AI_4$-free. 

Suppose first that it is an induced $C_4$-restriction. Then $M \rvert H \cong C_4$ and write $E \cap H = \{v_1, v_2, v_3, v_1+v_2+v_3\}$. Let $v_4 \in E \del H$. Then there exists $v \in E \cap H$ such that $v+v_4 \in E$, as otherwise $M^c \rvert \cl(\{v_1+v_2, v_1+v_3, v_1+v_2+v_3+v_4\})$ is an $F_7$-restriction. Without loss of generality, suppose that $v_1+v_4 \in E$. Now, we must have that $v_2+v_3+v_4 \in E$ or $v_1+v_2+v_3+v_4 \in E$, as otherwise, $\{v_2, v_3, v_4, v_1+v_4\}$ would violate $AI_4$-freeness. If $v_2+v_3+v_4 \in E$, then $\{v_1, v_2, v_1+v_4, v_2+v_3+v_4\}$ would violate $AI_4$-freeness. If $v_1+v_2+v_3+v_4 \in E$, then $\{v_1, v_2, v_4, v_1+v_2+v_3+v_4\}$ would violate $AI_4$-freeness.

Hence we may assume that it has no induced $C_4$-restriction. Suppose $|H \cap E| = 6$. Recall that $M$ is full-rank and $|E| \leq 7$. We may take $v_4 \in E \del H$, and $v_1,v_2,v_3 \in E \cap H$ for which $v_1+v_2+v_3 \notin E$, and $\{v_1,v_2,v_3,v_4\}$ would violate $AI_4$-freeness. Hence we may assume $|E \cap H| = 5$. Let $E \cap H = \{v_1,v_2,v_3, v_1+v_2,v_1+v_2+v_3\}$, and pick $v_4 \in E \del H$. By symmetry and the fact that $|E| \leq 7$, we may assume that $v_1+v_4 \notin E$. It follows that $v_1+v_2+v_3+v_4 \in E$, as otherwise $\{v_2,v_3, v_1+v_2,v_4\}$ would violate $AI_4$-freeness. But then $\{v_1, v_2, v_4, v_1+v_2+v_3+v_4\}$ violates $AI_4$-freeness. 
\end{proof}

Hence we have that $\dim(M) \geq 5$. Let $k = \dim(M)$. By minimality, for every hyperplane $H$ of $G$, $H$ contains a hyperplane that satisfies one of the four outcomes. If, for any hyperplane $H$ of $G$, the conditions of Lemma \ref{AI4_extend_hyperplane_2} are satisfied, then Lemma \ref{AI4_extend_hyperplane_2} provides a contradiction. Hence, we may assume that, for every hyperplane $H$ of $G$, either $M \rvert H$ or $M^c \rvert H$ contains a $\PG(k-3,2)$-restriction, the projective geometry of dimension $k-2$. 

Moreover, since $\dim(M)=k \geq 5$, if $M$ contains a $\PG(k-3,2)$-restriction, then $M^c$ cannot contain a $\PG(k-3,2)$-restriction, as otherwise we would have $\dim(M) \geq  2(k-2)$, which implies $k \leq 4$. By switching to $M^c$ if necessary, we may therefore suppose that $M^c$ contains a $\PG(k-3,2)$-restriction in every hyperplane. Now, observe that $M$ is triangle-free, since otherwise any hyperplane containing such a triangle cannot contain a $\PG(k-3,2)$-restriction in $M^c$. Therefore, $M$ is both $AI_4$-free and triangle-free, so by Lemma \ref{AI4_tri_free}, it follows that $M$ is affine.
\end{proof}

The rest of this section describes a structural theorem for $AI_4$-free matroids, which will not be used in the proof of our main theorem Theorem \ref{main_theorem_intro}. For the main theorem, we will only use Lemma \ref{AI4_free_special_hyperplane}. 

In light of Lemma \ref{AI4_free_special_hyperplane}, we define the following four operations for a given $n$-dimensional matroid $M = (E,G)$, which we denote by $\alpha_0$, $\alpha_1$, $\beta_0$ and $\beta_1$. 

\begin{itemize}
	\item $\alpha_0(M)$ is the $0$-expansion of $M$ (recall that the $0$-expansion is an embedding of $M$ in a projective geometry of dimension $n+1$)
	\item $\alpha_1(M) = (E', G')$ is the $(n+1)$-dimensional matroid such that a copy of $G$ is embedded in $G'$, and $E' = E \cup (G' \del G)$
	\item $\beta_0(M) = (E', G')$ is the $(n+1)$-dimensional matroid with a copy of $G$ embedded in $G'$ and $E' = (w+E) \cup \{w\}$ for $w \in G' \del G$
	\item $\beta_1(M) = (E', G')$ is the $(n+1)$-dimensional matroid with a copy of $G$ embedded in $G'$ such that $E' = G \cup (w + E) \cup \{w\}$ for $w \in G' \del G$. 
\end{itemize}

We now state some straightforward facts about these four operations, all of which are easy to verify, and hence the proof is omitted.

\begin{lemma}\label{alpha_beta_technical}
Let $M = (E, G)$ be a matroid. Then the following hold.
	\begin{enumerate}
		\item \label{t1} For $\gamma \in \{\alpha_0$, $\alpha_1$, $\beta_0$, $\beta_1\}$, if $\gamma(M)$ is $AI_4$-free then $M$ is $AI_4$-free.
		\item \label{t2} For $\gamma \in \{\alpha_0, \alpha_1\}$, if $M$ is $AI_4$-free then $\gamma(M)$ is $AI_4$-free.
		\item \label{t3} For $\gamma \in \{\alpha_0, \alpha_1\}$, $M$ is $I_3$-free if and only if $\gamma(M)$ is $I_3$-free.
		\item \label{t4} For $\gamma \in \{\beta_0, \beta_1\}$, if $M$ is $AI_4$-free and $I_3$-free then $\gamma(M)$ is $AI_4$-free. 
		\item \label{t5} For $\gamma \in \{\beta_0, \beta_1\}$, if $M$ contains an induced $I_3$-restriction, then $\gamma(M)$ contains an independent set $\{x_1,x_2,x_3,x_4\} \subseteq E$ for which $\sum_{j \neq i} x_j \notin E$ for all $i$ (i.e., $\gamma(M)$ is not $AI_4$-free).
		\item \label{t6} $\beta_1(M)$ is $I_3$-free.
		\item \label{t7} $\beta_0(M)$ contains an induced $I_3$-restriction unless $E$ is a flat.
		\item \label{t8} If $E$ is a flat, then $\beta_0(M) = \gamma_k \cdots \gamma_1(N_0)$ where $N_0$ is a $1$-dimensional matroid, and $\gamma_i \in \{\alpha_0, \alpha_1\}$ for $1 \leq i \leq k$. 
	\end{enumerate}
\end{lemma}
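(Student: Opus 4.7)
The plan is to verify each of the eight items by direct computation from the definitions, exploiting the decomposition $G' = G \cup (G' \setminus G)$ and the affine translation by the new element $w$. The single observation that powers every case is that for $u_1, \ldots, u_k \in G'$, the sum $u_1 + \cdots + u_k$ lies in $G$ iff an even number of the $u_i$'s lie in $G' \setminus G$. This lets me locate $2$-sums and $3$-sums of any candidate violating set and compare them against $E'$ for each of the four operations.

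For (1), an $AI_4$-violating set $\{x_1, \ldots, x_4\}$ in $M$ lifts either to itself (for $\gamma \in \{\alpha_0, \alpha_1\}$) or to $\{w+x_1, \ldots, w+x_4\}$ (for $\gamma \in \{\beta_0, \beta_1\}$) in $\gamma(M)$, and a direct check shows it remains a violation; the contrapositive gives (1). For (2) and (3), I suppose for contradiction there is a violation or induced $I_3$ in $\gamma(M)$ for $\gamma \in \{\alpha_0, \alpha_1\}$ and split on how many elements of the offending set lie in $G' \setminus G$. In every case except ``all elements lie in $G$'' the parity observation forces some required sum into $E'$, preventing a violation, and the ``all in $G$'' case reduces cleanly to one in $M$ itself. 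Claim (6) is similar: since $G \subseteq E'$ in $\beta_1(M)$, any two elements of a candidate induced $I_3$-triple that lie in $G$ have their sum in $E'$, and a short case check shows this always occurs.

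The heart of the lemma is (4) and (5). For (4), an $AI_4$-violating set $\{v_1, \ldots, v_4\}$ in $\gamma(M)$ for $\gamma \in \{\beta_0, \beta_1\}$ must, by parity, lie entirely in $G' \setminus G$; writing $v_i = w + y_i$ with $y_i \in [G]$ and at most one $y_i = 0$, two subcases arise. If every $y_i$ is nonzero, then $y_1, \ldots, y_4$ are linearly independent elements of $E$ with every three-sum missing from $E$, an $AI_4$-violation of $M$. If $y_1 = 0$, then $\{y_2, y_3, y_4\}$ is an induced $I_3$-restriction of $M$, being linearly independent in $E$ with every 2-sum and the 3-sum outside $E$. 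Both alternatives are ruled out by the joint hypotheses of $AI_4$-freeness and $I_3$-freeness. Claim (5) is the converse: given an induced $I_3$ on $\{x_1, x_2, x_3\}$ in $M$, the set $\{w, w+x_1, w+x_2, w+x_3\}$ is the desired violating 4-set in $\gamma(M)$, as seen by checking each of the four three-sums against $E'$.

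For (7), when $E$ is not a flat I pick $e_1, e_2 \in E$ with $e_1 + e_2 \notin E$; then $\{w, w+e_1, w+e_2\}$ is linearly independent, and the four non-basis elements $e_1, e_2, e_1+e_2, w+e_1+e_2$ of its closure all lie outside $E' = \{w\} \cup (w+E)$, giving an induced $I_3$-restriction in $\beta_0(M)$. For (8), when $E$ is a $d$-dimensional flat in the $n$-dimensional ambient $G$, the explicit constructions $\beta_0(M) \cong \alpha_0^{n-d} \alpha_1 \alpha_0^{d-1}(N_0)$ for $d \geq 1$ with $N_0$ the empty $1$-dimensional matroid, and $\beta_0(M) \cong \alpha_0^{n}(N_0)$ for $d = 0$ with $N_0$ the one-element $1$-dimensional matroid, suffice; a single application of $\alpha_1$ to an empty-ground-set matroid produces a coset of a $d$-dimensional flat (matching the coset $w + [E]$ after relabelling), and the $\alpha_0$'s merely adjust the ambient dimension. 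No step presents a genuine obstacle; the main care required is the case bookkeeping in (2) and (4), where the four operations behave slightly differently from one another.
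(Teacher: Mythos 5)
The paper omits the proof of this lemma entirely (``all of which are easy to verify, and hence the proof is omitted''), so there is no argument to compare against; your direct case-by-case verification is correct and is exactly the intended routine check, with the parity observation and the reduction of a violating $4$-set $\{w+y_1,\dots,w+y_4\}$ in $\beta_i(M)$ to either an $AI_4$-violation (all $y_j\neq 0$) or an induced $I_3$ (some $y_j=0$) of $M$ being the right way to handle (4). One small imprecision: in (6) the pigeonhole is on the two sides $G$ and $G'\setminus G$ (two elements on the \emph{same} side always have their sum in $G\subseteq E'$), not on two elements both lying in $G$, since all three elements of the candidate triple could lie in $G'\setminus G$.
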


Using this lemma, we can prove the following structure theorem for $AI_4$-free matroids, stated below.  

\begin{theorem}\label{AI4_structure}
The class of $AI_4$-free matroids is the union of the following two classes. $\cN_0$ denotes the set of $1$-dimensional matroids.

	\begin{enumerate}
		\item $\cM_0 = \{ \gamma_k \cdots \gamma_0 (N_0) \mid k \geq 0, \lambda_i  \in \{\alpha_0, \alpha_1, \beta_1\}, N_0 \in \cN_0 \}$
		\item $\cM_1 = \{ \gamma_k \cdots \gamma_0 \beta_0 (M) \mid k \geq 0, \lambda_i \in \{\alpha_0, \alpha_1\},  M \in \cM_0 \} $
	\end{enumerate}

\end{theorem}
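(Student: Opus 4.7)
My plan is to prove the two inclusions separately. The containment $\cM_0 \cup \cM_1 \subseteq \{AI_4\text{-free matroids}\}$ is the easier direction. For this I would first show by induction on the length of the defining sequence that every matroid in $\cM_0$ is both $AI_4$-free and $I_3$-free: the base case $\cN_0$ satisfies both properties vacuously, and parts (2), (3), (4) and (6) of Lemma \ref{alpha_beta_technical} imply that each of $\alpha_0, \alpha_1, \beta_1$ preserves both properties (here (4) requires $I_3$-freeness of the input, which is maintained throughout the induction). Then for $\cM_1$, the outer $\beta_0$ applied to a matroid in $\cM_0$ is $AI_4$-free by (4), and each subsequent $\alpha_0$ or $\alpha_1$ preserves $AI_4$-freeness by (2).

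The reverse containment is the harder direction, which I would prove by induction on $\dim(M)$, with the trivial base case $\dim(M) = 1$ being $M \in \cN_0 \subseteq \cM_0$. For the inductive step, Lemma \ref{AI4_free_special_hyperplane} produces a hyperplane $H$ of $G$ satisfying one of four configurations, and each of these yields a decomposition $M = \gamma(M')$ for some $\gamma \in \{\alpha_0, \alpha_1, \beta_0, \beta_1\}$ acting on an $(n-1)$-dimensional matroid $M'$ on ambient space $H$: if $E \subseteq H$ take $\gamma = \alpha_0$ with $M' = (E,H)$; if $G \del E \subseteq H$ take $\gamma = \alpha_1$ with $M' = (E \cap H, H)$; if $E \cap H = \varnothing$ with $E \neq \varnothing$ take $\gamma = \beta_0$ with $M' = (w + (E \del \{w\}), H)$ for any chosen $w \in E$; and if $H \subseteq E$ with $E \neq G$ take $\gamma = \beta_1$ with $M' = (w + (E \del (H \cup \{w\})), H)$ for any chosen $w \in E \del H$. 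The degenerate cases $E = \varnothing$ and $E = G$ fall under $\gamma = \alpha_0$ and $\gamma = \alpha_1$ respectively. By part (1) of Lemma \ref{alpha_beta_technical}, $M'$ is $AI_4$-free, so the inductive hypothesis gives $M' \in \cM_0 \cup \cM_1$; when $\gamma \in \{\alpha_0, \alpha_1\}$ both classes are closed under $\gamma$ by definition, so $M \in \cM_0 \cup \cM_1$.

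The main obstacle is the cases $\gamma \in \{\beta_0, \beta_1\}$, which require identifying $M'$ as lying in the smaller class $\cM_0$. Here I would use the contrapositive of part (5) of Lemma \ref{alpha_beta_technical}: since $M = \gamma(M')$ is $AI_4$-free, $M'$ cannot contain an induced $I_3$-restriction. Combined with the auxiliary claim that every $I_3$-free matroid in $\cM_1$ already lies in $\cM_0$, it follows that $M' \in \cM_0$. To prove the auxiliary claim, write a putative $M' \in \cM_1$ as $\gamma_k \cdots \gamma_0 \beta_0(M'')$ with $M'' \in \cM_0$; part (3) forces $\beta_0(M'')$ itself to be $I_3$-free, hence by (7) the ground set of $M''$ is a flat, and then (8) expresses $\beta_0(M'')$ as a composition of $\alpha_0$'s and $\alpha_1$'s starting from $\cN_0$, placing $\beta_0(M'')$ (and thus $M'$) in $\cM_0$. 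Finally, $M = \beta_1(M') \in \cM_0$ when $\gamma = \beta_1$, and $M = \beta_0(M') \in \cM_1$ when $\gamma = \beta_0$, completing the induction.
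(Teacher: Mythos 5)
Your proof is correct and follows essentially the same route as the paper: both directions peel off hyperplanes via Lemma \ref{AI4_free_special_hyperplane} and control which operations can occur using the enumerated statements of Lemma \ref{alpha_beta_technical}. The only difference is organizational — you run an induction on dimension with an explicit auxiliary claim (an $I_3$-free matroid in $\cM_1$ lies in $\cM_0$, via statements (3), (7), (8)), whereas the paper analyzes the position of the first $\beta_0$ in a global decomposition sequence; these are logically equivalent uses of the same facts.
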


\begin{proof}
First note that all such matroids described are indeed $AI_4$-free by statements \ref{t2}, \ref{t3}, \ref{t4}, \ref{t6} in Lemma \ref{alpha_beta_technical}.

Let $M = (E,G)$ be an $AI_4$-free matroid. By applying Lemma \ref{AI4_free_special_hyperplane} and statement \ref{t1} in Lemma \ref{alpha_beta_technical} iteratively, there is a sequence of operations $\gamma_i$ such that $M = \gamma_k \gamma_{k-1} \cdots \gamma_1 (\cE)$, and $\gamma_i \in \{\alpha_0, \alpha_1, \beta_0, \beta_1\}$ for $i=1,\cdots,k$. Let us write $M_l = (E_l, G_l) = \gamma_{l} \gamma_{l-1} \cdots \gamma_1(\cE)$ and by construction each $M_l$ is $AI_4$-free. 

Now, suppose that $\gamma_l$ is the first occurrence of $\beta_0$, if there is any, so that $\gamma_j \in \{\alpha_0, \alpha_1, \beta_1\}$ for $j=1,2,\cdots,l-1$, and $\gamma_l = \beta_0$. We may assume that $E_{l-1}$ is not a flat, as otherwise, by statement \ref{t8} in Lemma \ref{alpha_beta_technical} we may rewrite $\gamma_l \cdots \gamma_0$ in terms of $\alpha_0$ and $\alpha_1$, and repeat the argument. Now, provided there is such an occurrence of $\beta_0$, statement \ref{t7} in Lemma \ref{alpha_beta_technical} implies that the matroid $M_l$ contains an induced $I_3$-restriction, and therefore we must have $\gamma_i \in \{\alpha_0, \alpha_1\}$ for all $i > l$; otherwise $AI_4$-freeness is violated by statement \ref{t5} in Lemma \ref{alpha_beta_technical} again. The result now follows. 
\end{proof}

\section{The main result}
We are now ready to prove the main structure theorem for $I_4$-free, triangle-free matroids, restated below.

\begin{theorem}\label{main_theorem}
For a full-rank matroid $M = (E,G)$, $M$ is $I_4$-free and triangle-free if and only if 

	\begin{itemize}
		\item $M$ can be obtained by a sequence of $0$-expansions and $1$-expansions of a $1$-dimensional matroid, or
		\item $M$ can be obtained by a sequence of doublings of $\SAG(n-1,2)$, $n \geq 3$. 
	\end{itemize}
\end{theorem}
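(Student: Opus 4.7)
The proof has two directions. For the backward direction, I verify that each operation in the two outcomes preserves $I_4$-freeness and triangle-freeness. The $0$-expansion is trivial; the $1$-expansion preserves $I_4$-freeness by Lemma \ref{affine_exp_prop} and triangle-freeness by a direct case check on potential new triangles (splitting by how many new elements appear in $\{x\} \cup (x + H)$ and using $E \cap H = \varnothing$ to rule out each case). For the second outcome, the base matroid $\SAG(n-1,2)$ is $I_4$-free and triangle-free as noted in the preliminaries, and Lemma \ref{doubling_prop} shows both properties persist under doubling.

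For the forward direction, apply Theorem \ref{affine_or_cn2}. If $M$ is non-affine, the conclusion $(E, \cl(E)) \cong D^k(\SAG(n-1,2))$ combined with the full-rank hypothesis $\cl(E) = G$ places $M$ directly into the second outcome. For the affine case, I would induct on $n = \dim(M)$. The base case $n = 1$ is trivial, and the inductive step consists of finding a hyperplane $G_0 \ne H_{aff}$ of $G$ such that $M$ is the $1$-expansion of $M \rvert G_0$; equivalently, the $2^{n-2}$-point ``far coset'' $A \del G_0$ (where $A = G \del H_{aff}$) must lie entirely in $E$. Once such a $G_0$ is found, $M \rvert G_0$ inherits all the hypotheses in dimension $n-1$, so the inductive hypothesis applies (prepending a $0$-expansion step if $M \rvert G_0$ happens to be rank-deficient).

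The key observation driving the inductive step is that $M$ is $AI_4$-free: for any independent $\{x_1, x_2, x_3, x_4\} \subseteq E$, the linear functional defining $H_{aff}$ takes value $1$ on each $x_i$, forcing every two-term and four-term sum into $H_{aff}$ and hence outside $E$; $I_4$-freeness then forces at least one three-term sum into $E$, which is precisely $AI_4$-freeness. Applying Lemma \ref{AI4_free_special_hyperplane} to $M$ and to $M^c$, the cases $E \subseteq H'$ and $G \del E \subseteq H'$ either contradict full-rank or force $M \cong \AG(n-1,2)$ (which is the $1$-expansion of $\AG(n-2,2)$, delivering the desired $G_0$); the cases $H' \subseteq E$ and $H' \subseteq G \del E$ either contradict triangle-freeness (for $n \ge 3$) or, when $H' \ne H_{aff}$, force $E$ into a coset of $V_0 \cap V_{aff}$ of dimension $n-2$ in $V = \mathbb{F}_2^n$, contradicting full-rank.

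The main obstacle is that the remaining configurations collapse to the vacuous choice $H' = H_{aff}$, so Lemma \ref{AI4_free_special_hyperplane} alone does not locate the needed $G_0$ for a general non-$\AG$ affine matroid. I anticipate needing a supplementary structural analysis in the spirit of Lemma \ref{series_AG_extend}: assume for contradiction that the defect $A \del E$ affinely spans $A$, pick an affine basis $\{y_1, \ldots, y_n\} \subseteq A \del E$ of $A$ (which, being affinely independent in $A$, is also linearly independent in $V$), and exploit $AI_4$-freeness of $M^c$ (which produces three-term sums of the $y_i$ lying back in $A \del E$) together with $I_4$-freeness of $M$ and the full-rank hypothesis to locate either an induced $I_4$-restriction in $M$ or a triangle in $E$. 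This contradiction would show that $A \del E$ cannot affinely span $A$ and so is contained in some affine hyperplane $A \cap G_0$, yielding the required $G_0$. This delicate case analysis is the crux of the affine case.
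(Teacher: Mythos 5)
Your backward direction and your treatment of the non-affine case via Theorem \ref{affine_or_cn2} match the paper. In the affine case you also correctly observe that $M$ itself is $AI_4$-free and correctly diagnose the problem: applying Lemma \ref{AI4_free_special_hyperplane} to $M$ (or $M^c$) is vacuous, because the outcome $E \cap H' = \varnothing$ is already witnessed by $H' = H_{\mathrm{aff}}$. But at that point your argument stops being a proof: the ``supplementary structural analysis'' you anticipate --- assuming $A \setminus E$ affinely spans $A$, choosing an affine basis of $A$ inside $A \setminus E$, and hunting for an induced $I_4$ or a triangle --- is exactly the crux of the theorem, and you have not carried it out. It is not a routine verification; this is where all the difficulty of the affine case lives, so as written the proposal has a genuine gap.

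The missing idea in the paper is a translation trick that makes Lemma \ref{AI4_free_special_hyperplane} bite. Fix $z \in E$ and consider the matroid $M_0 = (F, H_{\mathrm{aff}})$ with $F = \{v : v + z \in E\}$, i.e.\ the translate $z + E$ pushed into the hyperplane $H_{\mathrm{aff}}$. One checks that $I_4$-freeness of $M$ makes $M_0$ $I_3$-free, and affineness plus $I_4$-freeness of $M$ makes $M_0$ $AI_4$-free; crucially, $M_0$ is \emph{not} affine in any trivial way, so Lemma \ref{AI4_free_special_hyperplane} applied to $M_0$ (inside $H_{\mathrm{aff}}$) produces a hyperplane $H'$ of $H_{\mathrm{aff}}$ carrying real information. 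Each of the four outcomes for $F$ relative to $H'$ then translates directly (after adjoining $z$ or $z+w$ to $H'$ to form a hyperplane $H''$ of $G$) into $M$ being a $0$-expansion or a $1$-expansion of $M \rvert H''$; the case $F \cap H' = \varnothing$ is handled by Lemma \ref{I_3_triangle_free}, which forces $M_0$ to be an affine geometry and reduces to another case. If you want to complete your version, you should either adopt this reduction to $M_0$ or actually prove your claim that $A \setminus E$ cannot affinely span $A$ --- and the latter would in effect require you to redo the work of Lemmas \ref{subset_lemma}--\ref{AI4_extend_hyperplane_2} in a different guise.
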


\begin{proof}
The backward direction follows from Lemmas \ref{doubling_prop}, \ref{affine_exp_prop}.

We now prove the forward direction. By Theorem \ref{affine_or_cn2}, $M$ can either be obtained by a sequence of doublings of $\SAG(n-1,2)$ or $M$ is affine. If the former case holds then we are done, so we may suppose that $M$ is affine, so that there exists a hyperplane $H$ of $G$ for which $E \subseteq G \del H$. 

If $M$ is the empty matroid, then the result is trivially true, so suppose that $E \neq \varnothing$. Pick $z \in E$, and consider the matroid $M_0 = (F, H)$ where $F = \{v \mid v+z \in E\}$. Since $M$ is $I_4$-free, it follows that $M_0$ is $I_3$-free. Moreover, since $M$ is affine, it follows that $M_0$ is $AI_4$-free. 

Let $H'$ be the hyperplane of $H$ from the conclusion of Lemma \ref{AI4_free_special_hyperplane}. We will now go through each conclusion of Lemma \ref{AI4_free_special_hyperplane} to see that in each of the cases, we obtain a $0$-expansion or a $1$-expansion, proving the result. 

\textbf{Case 1: $F \subseteq H'$}.\

In this case, let $H'' = \cl(H' \cup \{z\})$, so that $H''$ is a hyperplane of $G$. Then $M \rvert H''$ is an affine matroid with $H'$ satisfying $E \cap H'' \subseteq H'' \del H'$. Then

	\begin{align*}
		E &= \{z\} \cup (z + F)\\
		& \subseteq \{z\} \cup (z+H') \\
		& \subseteq H''.
	\end{align*}  

Therefore, $M$ is the $0$-expansion of the affine matroid $M \rvert H''$. 

\textbf{Case 2: $H \del F \subseteq H'$}.\

Let $H'' = \cl(H' \cup \{z\})$ as before. Let $w \in H \del H'$. Then

	\begin{align*}
		E &= \{z\} \cup (z + F)\\
		& = \{z\} \cup (z+F \cap H')\cup(z + F \del H') \\
		& = \{z\} \cup (z+F \cap H')\cup(z + H \del H') \\
		& = (E \cap H'') \cup \{z+w\} \cup (z+w+H')
	\end{align*}  

Therefore, $M$ is the $1$-expansion of the affine matroid $M \rvert H''$.

\textbf{Case 3: $F \cap H' = \varnothing$}. 

Note that if $M_0$ is rank-deficient, then we are in Case 1, so assume that $M_0$ is full-rank. Observe that $M_0$ is $I_3$-free and triangle-free (since it is affine). Therefore, Lemma \ref{I_3_triangle_free} implies that $M_0$ is a full-rank affine geometry. We are in Case 2. 

\textbf{Case 4: $H' \subseteq F$}. 

Let $w \in F \del H'$ (if no such $w$ exists, then $M_0$ is rank-deficient and we are in Case 1), and let $H'' = \cl(H' \cup \{z+w\})$. 

Then $M \rvert H''$ is an affine matroid with $H'$ as its hyperplane such that $H'' \cap E \subseteq H'' \del H'$. Then

	\begin{align*}
		E &= \{z\} \cup (z + F)\\
		& = \{z\} \cup (z+F \cap H')\cup(z + F \del H') \\
		& =  \{z\} \cup \{z + H'\} \cup  \{z + w\} \cup  (z+w+ F \cap H')  \\
		& =  \{z\} \cup \{z + H'\} \cup (E \cap H'')
	\end{align*} 

Therefore, $M$ is the $1$-expansion of the affine matroid $M \rvert H''$. 

Thus $M$ is either the $0$-expansion or $1$-expansion of another affine matroid of smaller dimension. The result now follows by induction on $\dim(M)$. 
\end{proof}

\section*{References}
\newcounter{refs}
\begin{list}{[\arabic{refs}]}
{\usecounter{refs}\setlength{\leftmargin}{10mm}\setlength{\itemsep}{0mm}}

\item\label{bb}
R. C. Bose, R. C. Burton, 
A characterization of flat spaces in a finite geometry and the uniqueness of the Hamming and the MacDonald codes, 
J. Combin. Theory 1 (1966), 96--104. 

\item\label{bkknp}
M. Bonamy, F. Kardo\v{s}, T. Kelly, P. Nelson, L. Postle,
The structure of binary matroids with no induced claw or Fano plane restriction,
Advances in Combinatorics, 2019:1,17 pp.

\item\label{bhw}
A. A. Bruen, L. Haddad, D. Wehlau,
Binary Codes and Caps,
J. Combin. Des 6 (1998) 275-284.

\item\label{dt}
A. A. Davydov, L. M. Tombak,
Quasiperfect linear binary codes with distance 4 and complete caps in projective geometry,
Problems of Information Transmission 25 No. 4 (1990), 265-275.

\item\label{crit_threshold}
J. Geelen, P. Nelson,
The critical number of dense triangle-free binary matroids,
J. Combin. Theory Ser. B 116 (2016), 238--249.

\item\label{g85}
A. Gy\'arf\'as, Problems from the world surrounding perfect graphs, Proceedings of the International Conference on Combinatorial Analysis and its Applications, (Pokrzywna, 1985), Zastos. Mat. 19 (1987), 413--441.

\item\label{nn}
P. Nelson, K. Nomoto,
The structure of claw-free binary matroids,
arXiv:1807.11543 (2018).

\item \label{oxley}
J. G. Oxley, 
Matroid Theory,
Oxford University Press, New York (2011).

\item\label{s81}
D.P. Sumner, 
Subtrees of a graph and chromatic number, in The Theory and Applications of Graphs, (G. Chartrand, ed.), John Wiley \& Sons, New York (1981), 557--576.

\end{list}

\end{document}